\newtheorem{theorem}{Theorem} 
\theoremstyle{definition}
\newcommand{\RemoveForCamera}[1]{}
\newcommand{\indc}[1]{{\mathds{1}_{#1}}}
\newcommand{\RemoveCycle}[1]{}
\newtheorem{corollary}[theorem]{Corollary}
\newtheorem{thm}{{\bf Theorem}}
\newtheorem{conjecture}{Conjecture}
\newcounter{hints}
\renewcommand{\thehints}{\roman{hints}}
\newcommand{\hintedrel}[2][]{%
  \stepcounter{hints}%
  \if\relax\detokenize{#1}\relax\else\csxdef{hint@#1}{\thehints}\fi
  \mathrel{\overset{\textrm{(\thehints)}}{\vphantom{\le}{#2}}}%
}
\newcommand{\restarthintedrel}{\setcounter{hints}{0}}
 \newcommand{\FocalNEAndOthers}[1]{}    
\newcommand{\hide}[1]{}
\newcommand{\bpi}{{\bm \pi}}
\newcommand{\eop}{\hfill $\Box$}                                                                                                                                                                                                                                                                                                                                                                                                                  
\newcommand{\floor}[1]{\lfloor #1 \rfloor}
\def\BibTeX{{\rm B\kern-.05em{\sc i\kern-.025em b}\kern-.08em
    T\kern-.1667em\lower.7ex\hbox{E}\kern-.125emX}}
\title{ 
Queue or lounge: strategic design for strategic customers
 }
\author{Riya Sultana, Khushboo Agarwal and  Veeraruna Kavitha  %
\thanks{R. Sultana and V. Kavitha are with Industrial Engineering and Operations Research, IIT Bombay, Powai, Mumbai, 400076, India
        {\tt\small \{riya{\_}sultana, vkavitha\}@iitb.ac.in}}%
\thanks{K. Agarwal is with Inria Sophia Antipolis, 2004 Route des Lucioles, Valbonne 06902, France
        {\tt\small agarwal.khushboo@inria.fr}}%
}
\begin{document}
\maketitle
\thispagestyle{empty}
\pagestyle{empty}

\begin{abstract}
Considering an M/M/1 queue with an additional
lounge facility (LF), the quest of this paper is to understand
the instances when LF is an attractive option, from customer
perspective as well as from  system perspective – will the
customers choose to join the queue or prefer to detour briefly
to lounge? In reality, customers do not perform complex
computations for such tasks, but instead choose based on some
heuristics. We further assume that the customers pessimistically
anticipate the future congestion while making the choice. Our
analysis reveals that the customers use the LF only when
the queue is too crowded, and the lounge is relatively empty;
however, strikingly, the customer choice is more inclined towards
rejection for the LF in systems with higher traffic (load).

We also explore an optimization problem where the system
determines whether to implement an LF and what capacity
it should have, while accounting for customers’ behavioral
responses. Under low load conditions, the system benefits from
designing a high-capacity lounge, and the customers also prefer
to use the LF actively. Surprisingly, neither the system prefers
big LF, nor the customers prefer to use the LF
profusely at high load conditions; optimal for either is to use the
LF sparingly. Thus, importantly, the strategic system and the
bounded-rational customers are not in a tug-of-war situation.




\hide{Consider a  queueing system with a single queue and server, equipped with an additional lounge facility (LF). Customers can either join the queue directly or spend an exponentially distributed time in the lounge before entering the queue. In reality, customers do not perform complex computations for such tasks, and rather choose based on some heuristics. Therefore, we assume that the customers pessimistically anticipate the future congestion (and discomfort while waiting) in both queue and lounge, and then decide. Our analysis reveals that the customers use the LF when the queue is overly crowded, and the lounge is relatively empty. }

%
%


   \hide{ In many real-world service environments, customers often face the trade-off between joining a queue immediately or waiting in a  comfortable lounge before entering the queue. This study investigates such behavior in an  queueing system equipped with a lounge facility, where customers are modeled as bounded rational agents. Each arriving customer decides between the queue and the lounge based on observed congestion levels, incorporating estimated waiting cost pessimistically. We analyze the strategic decision-making of customers and its interplay with system-level design. Our results show that customers are more inclined to use the lounge when the queue is heavily congested and the lounge is relatively free. However, under high load conditions, customers tend to avoid the lounge unless it offers significantly lower waiting costs. From the system’s perspective, providing a lounge with low occupancy is optimal under high load factor of the system. In contrast, under low load conditions, both the system and customers favor a lounge with higher occupancy limits. This work contributes to the design of queue management policies that balance comfort, efficiency, and customer behavior, offering actionable insights for service systems such as airports, hospitals, banks etc.}
\end{abstract}

\section{Introduction}
Customers often encounter the choice to join or not to join a queue in real-life. Naor in \cite{naor1969regulation} first analyzed this question for strategic customers. This study ignited an extensive research on \textit{strategic queueing} where customers can strategically decide to join the queue, balk (not to join the queue at all), renege (leave the queue while waiting in the queue), jockey (switch between queues), etc. For comprehensive coverage of this field, refer to \cite{hassin2003queue, hassin2016rational} and other works like \cite{honnappa2015strategic, altman2005applications, bendel2018cooperation}.  

A particular system which interests us is famously known as retrial queueing system. Here, customers facing long queues may temporarily leave for a random amount of time (in what is called an `orbit') before attempting to rejoin, thus avoiding extended waiting times.
Several variants of retrial queues have been studied before, see, e.g., \cite{kerner2020strategic, wang2013strategic, cui2019model, avrachenkov2008retrial}. 
A key feature of retrial queues in general is that the size (the number of customers waiting) of the orbit is non-observable to the customers. An exception to this point is the work in \cite{wang2013strategic} where customers can observe both server status (idle/busy) and orbit size; however, in this case, there is no queue as the customers either receive immediate service or wait in the orbit. Our work differs precisely at this point by introducing a system with both a queue and a lounge facility (LF), which are observable to the customers at their respective arrival instants. 

The concept of an LF is commonly seen in airports, visa centers, movie theaters, hospitals, etc. In fact, the authors in \cite{juneja2016lounge} conducted a  study of a single-queue, single-server system with an LF, developing a threshold-based policy for customers who begin in the lounge and then strategically decide whether to join the queue. Our work differs by giving customers the freedom to choose between the queue and the lounge before entering the system.

Specifically, we examine an M/M/1 queuing system with a lounge facility, where arriving customers have the flexibility to join the queue directly or wait in the lounge for an exponentially distributed time before joining the queue.  Once made, the decisions can not be changed, and the customers can not balk. The service is provided in the order of joining the queue.

Importantly, we depart from the standard strategic queuing literature by considering bounded rational customers, recognizing that perfect rationality is often unrealistic in real-world contexts (see \cite{camerer2011behavioral, sandholm2010population, singh2024stochastic, agarwal2025two}). Specifically we assume each arriving customer to choose between the two options (queue or lounge), after estimating the  costs corresponding to the two options using some simple calculations (e.g., as in \cite{mckelvey1995quantal}). 
Often people tend to estimate the rate of occurrences while drawing conclusions about completion time   of a future event;
 we assume the customers to compute the waiting delays in either option using fluid rates. We also consider that the customers  pessimistically anticipate all future arrivals and the existing lounge occupants to join the queue before them, while exploring the LF option. This pessimal anticipation approach has precedent in works like \cite{aumann1961core, sultana2024cooperate, singh2024stochastic}.

\hide{Specifically, it is considered that each arriving customer chooses either to wait in the queue or the LF based on the estimated waiting costs in the two options, all while balancing the tradeoff between immediate inconvenience (for waiting in queue) and potential service delays (due to longer queue at the return instant).  For these estimates, customers observe current congestion levels, and pessimistically anticipate the future arrivals and everyone already in the lounge to join the queue before their return. This pessimal anticipation approach has precedent in works like \cite{sultana2024cooperate, singh2024stochastic}.}

\hide{Importantly, we depart from the standard strategic queuing literature by considering bounded rational customers, recognizing that perfect rationality is often unrealistic in real-world contexts (see \cite{camerer2011behavioral, sandholm2010population, singh2024stochastic, agarwal2025two}). Specifically we assume each arriving customer to 
estimate the  costs corresponding to the two options using some simple calculations
and based on some pessimistic anticipations. 
Often people tend to estimate the rate of occurrences while drawing conclusions about the  likely  time of occurrence of a future event, 
we assume the customers to compute the costs of either option using fluid rates. We also consider  customers that pessimistically anticipate all future arrivals and the existing lounge occupants to join the queue before them, while exploring the LF option. 
Customer choices    balance the tradeoff between the immediate inconvenience cost (for waiting in queue) and the anticipated cost of  potential service delays (due to    lounge detour).
The pessimal anticipation approach (e.g.,     \cite{sultana2024cooperate, singh2024stochastic}) and simple computation of approximate utilities  (e.g., \cite{mckelvey1995quantal}) are  considered in several game theoretic contexts.}

\hide{
Specifically, it is considered that each arriving customer chooses either to wait in the queue or the LF based on the estimated waiting costs in the two options, all while balancing the tradeoff between immediate inconvenience (for waiting in queue) and potential service delays (due to    lounge detour).  For these estimates, customers observe current congestion levels, and pessimistically anticipate the future arrivals and everyone already in the lounge to join the queue before their return. This pessimal anticipation approach has precedent in works like \cite{sultana2024cooperate, singh2024stochastic}.}

For these reasons, we believe our framework offers novel contributions in terms of both system architecture and customer responses. We prove that the customers utilize the LF only when the observed queue is long and the lounge congestion remains low.
While accounting for the behavioral responses of the customers, we analyze a bi-level optimization problem for the system which aims to optimally design the LF for managing the congestion in lounge and queue. We conclude:
\begin{enumerate}
    \item[(i)] in low load settings, the system prefers to design the LF with high capacity and customers possess high tolerance for joining the LF, 
    \item[(ii)] surprisingly, under high load conditions, the optimal system design features low-occupancy lounge, and customers also become less willing to use the LF. 
\end{enumerate}
Notably, contrary to the expectation that system and customers' interests would conflict, we find that their optimal responses actually align.

\section{Queueing system with lounge facility}
We consider a queueing system consisting of a single server and a single queue, enhanced with a lounge facility (LF).  This system models scenarios where the arriving customers have the flexibility to either immediately join the queue or wait in a  lounge area before entering the queue. 

The customers arrive in the system following a Poisson process with rate $\lambda > 0$; thus the inter-arrival times between consecutive customers are independently and exponentially distributed. The service times are also assumed to be exponentially distributed with the rate 
$\mu > 0$. Upon arrival, each customer independently decides whether to enter the queue immediately (and wait for its turn to be served by the server) or wait in the lounge before joining the queue. The lounge serves as a temporary waiting area where  some customers may prefer to relax for sometime depending upon the occupancy levels. Customers choosing LF do not receive service immediately, but   join the queue after spending a random amount of time in the lounge. 
The duration of a customer's stay in the lounge is assumed to be exponentially distributed with rate 
$\nu > 0$.  This rate $\nu$ is assumed to be fixed,   reflects the customer behavior pattern and is considered not controllable. 
Further, we assume the following for the obvious reason of enhanced   efficiency:
\begin{enumerate}[label=\textbf{A.\arabic*}, ref=\textbf{A.\arabic*}]
    \item If the queue is empty and the lounge is occupied, then a customer immediately moves from the lounge to the queue. \label{a1}
\end{enumerate} 
It is assumed that the movement from lounge to queue takes no time. It is natural that customers react to such a facility (LF) in a strategic manner, probably depending upon the available information; we defer the discussion on this aspect to the next section. 

Define $\rho := \nicefrac{\lambda}{\mu}$. It is well-known that one requires $\rho < 1$ for a standard M/M/1 queue, with no LF. Interestingly, the same condition suffices to ensure   stability even for the system with LF (details are in sub-section \ref{subsec_MM1}),   hence we  assume:
\begin{enumerate}[label=\textbf{A.\arabic*}, ref=\textbf{A.\arabic*}]
\setcounter{enumi}{1}
    \item Consider $\rho < 1$. \label{a2}
\end{enumerate}
Once a customer enters the queue, they receive service following the First-Come, First-Served (FCFS) discipline. 

While a customer waits either in the queue or the lounge, it incurs some cost. In particular, while waiting in a queue, a customer incurs $\alpha$ cost, and in the lounge, it incurs a smaller $\beta$ cost per unit time; we assume $\alpha > \beta > 0$. Importantly, even the customers joining the queue after waiting in the lounge incur the cost at rate $\alpha$ for the additional time spent in the system.


If the customers in the system were naive, they may have a bias towards a choice (queue or lounge), they may simply follow others, etc. However, in reality, it is not uncommon that customers do simple calculations and comparisons while making such decisions. They may choose the LF, more so when congestion is high, in the hope of returning to a shorter queue.
In particular, they may also 
attempt to balance the immediate convenience of the lounge against the potential risk of facing a longer queue upon their return. We  capture such a behaviour of the customers in the immediate next. 


\hide{The arriving customers are considered to be Myopic agents. The costs $\alpha$, $\beta$, arrival $\lambda$ and departure rates $\mu$  is known to all. While a customer arrives, he can observe the number of customers in lounge and queue; the customers estimate the expected waiting cost for joining queue and lounge considering the worst case where all customers who arrive later, will join the queue. Say, a customer observe $Q$ players in queue and $L$ players in lounge before deciding, then the expected waiting cost in queue $C_Q(Q,L)$ and in lounge $C_L(Q,L)$. 

The mean estimated waiting  costs $C_Q(Q,L)$ and $C_L(Q,L)$   are derived as follows:
\begin{align}
       C_Q(Q,L)&=\frac{\alpha Q}{\mu} \label{eqn_firstcostL}\\
    C_L(Q,L)&=\frac{\beta}{\nu}+\frac{\alpha}{\mu}\left(Q-\frac{\mu -\lambda -L \nu}{\nu}\right)^+ \label{eqn_secondcostQ} 
\end{align}

Here, in $C_Q(Q,L)$, the term $\frac{Q}{\mu}$ indicates the waiting time in the queue when customer directly join the queue and in $C_L(Q,L)$, the term $\frac{1}{\nu}$ indicates the expected waiting time in lounge and then the term $\left(Q-\frac{\mu -\lambda -L \nu}{\nu}\right)^+$ indicates the expected number of customers in queue when the customer enters queue from lounge after mean waiting time $\frac{1}{\nu}$.

If  the anticipated waiting cost in queue is higher than waiting cost in lounge i.e., $C_Q(Q,L)>C_L(Q,L)$  then a  customer decides to join lounge otherwise join queue.  

We try to analyze how system evolves with time.}
\section{Decisions of bounded-rational customers}
For the system with LF, each customer faces a binary choice upon arrival. Let  this choice  be represented by $a \in \{1, 2\}$, where,
\begin{enumerate}
     \item[(i)] $a=1$ denotes the action to directly join the queue, and
    \item[(ii)] $a = 2$ denotes the action to first enter the lounge and then the queue.
\end{enumerate}
This decision is guided by the trade-off between the anticipated waiting costs associated with either choice. We assume that each customer knows the system parameters ($\alpha, \beta, \lambda, \mu$ and $\nu$). Further, the queue length (denoted by $Q$) and the lounge size (i.e., the number of customers waiting in the lounge, denoted by $L$) are fully observable to the arriving customer. The knowledge about the lounge size differentiates our work from the typical retrial queues where the size of the orbit is unobservable to the customers (\cite{avrachenkov2008retrial, kerner2020strategic, cui2019model}).



\subsection*{Bounded-rational behaviour and pessimal anticipation}
Any strategic customer aims to choose an option that minimizes their total expected waiting cost in the system. However, queueing systems involve a large number of customers and it is unreasonable to assume that everyone is completely rational (see \cite{agarwal2025two, mckelvey1995quantal, camerer2011behavioral, sandholm2010population}). Therefore, we incorporate bounded rationality into our modeling to capture more realistic patterns of customer behaviour.

In practice, customers rarely perform exact computations of expected waiting times, as such calculations require detailed consideration of the service mechanism, arrival process, and decisions of  other customers. Instead, customers rely on heuristic or approximate reasoning to guide their decisions. For instance, it is common to hear statements like ``in the last ten minutes, about eight people left the queue — things are moving faster now", or ``it has been five minutes and only one person moved — this might take a while", or even ``the queue looked shorter a few minutes ago, maybe more people are joining now". 

This kind of reasoning suggests that customers estimate queue evolution process via  fluid estimates constructed using  the most recent observations. 
For example, an incoming customer can anticipate the  queue to deplete at an effective rate of
$\mu$, when it considers joining the queue directly; hence if it observes  $Q$ number of waiting customers in the queue upon arrival,  its anticipated/expected  waiting time  (before its turn), by directly joining the queue ($a=1$), is simply $\nicefrac{Q}{\mu}$ and hence the anticipated cost is,
\begin{equation}
    C(a=1) = 
\alpha\frac{Q}{\mu}. \label{eq_estimatedcost1}
\end{equation}

On the other hand, if a customer considers relaxing in lounge, 
it might anticipate the queue to deplete at a different rate; for example,  if it anticipates all the future arrivals to join the queue directly then the effective (fluid) rate is $(\mu-\lambda)$. 
The latter kind of  anticipation reflects yet another commonly observed behavioral tendency, that of pessimal or worst-case anticipation --- all future arrivals join the queue directly increasing its waiting time (for instance, remarks like 
``we are approaching the closing time, and there are already  a large number of customers, hence better to return tomorrow'' reflect decisions based on worst-case anticipation). 

Further, the incoming customer considering the lounge decision, can also  (pessimistically) anticipate all the $L$ customers in the lounge to join back the queue before it. 
So, considering the fluid estimates and  both the pessimal anticipations (worst depletion rate $(\mu-\lambda)$ and all   $Q+L$ customers including those in the lounge getting served before the tagged customer), the 
anticipated/expected  waiting time (before the server gets empty), for an incoming customer considering the lounge decision,  equals $\nicefrac{(Q+L)}{(\mu-\lambda)}$. However,  the customer   does not plan to wait in the lounge till the end, instead plans to return after time $\nicefrac{1}{\nu}$;  then it anticipates $\nicefrac{(\mu-\lambda)}{\nu}$ number of customers to be served before its return to the queue;  in other words, it anticipates  the following queue size at the instance of joining back the queue:
\begin{eqnarray}
    Q+L - \frac{\mu-\lambda}{\nu}, \label{eq_first}
\end{eqnarray}

or zero if the above is negative.

In all, the anticipated cost of joining the queue later ($a=2$) is given by:
\begin{equation}
    C(a=2) = \frac{1} {\nu} \beta + \frac{\alpha}{\mu}  \left ( Q+L - \frac{\mu-\lambda}{\nu}  \right )^+. \label{eq_estimatedcost2}
\end{equation}

To summarize, we model the bounded-rational decisions of the customers by replacing the performance of the actual     queuing system  with  that of the fluid process and by considering pessimal anticipation for two aspects. 
At this point, we would like to draw the attention of the reader to the fact that the above usage of pessimal anticipation rule is not new. It has been previously utilized in several other game-theoretic contexts (e.g., \cite{sultana2024cooperate,  aumann1961core} on cooperative games, \cite{singh2024stochastic} while modeling vaccination decisions) to once again model the anticipated utilities of various players of the game.

\hide{on three important aspects:

\begin{enumerate}
    \item  pessimal anticipation: all customers present in the lounge will rejoin the queue before the tagged customer,
    \item[(ii)] all future arrivals after it will join the queue, and
    \item[(iii)] the queue length will evolve deterministically under a fluid approximation with $\mu-\lambda$ as the net reduction rate.
\end{enumerate}

who assume adversarial or worst-case responses from future customers. To be specific, a customer estimates the expected number of customers in the queue ($E[Q]$) at the instance of its return from lounge in the following pessimal manner:}


\hide{so if the queue keeps growing at this rate and coffee break
starts soon, then it may be better to join now''.

Such an anticipation is based on worst case analysis and is well in other contexts of game theory 

under the worst case assumption that all future arrivals will join the queue directly. If, instead, all the future arrivals are ignored, the depletion rate is simply $\mu$. Accordingly, if the customer chooses to join the queue, the expected waiting time until service is  $\nicefrac{Q}{\mu}$, where $Q$ denotes the current queue length. Alternatively, if the customer joins the lounge, they are assumed to return to the main queue after a delay that follows an exponential distribution with rate $\nu$, resulting in an expected lounge sojourn time of $\nicefrac{1}{\nu}$. During this time, the main queue continues to evolve. Under worst-case scenario, where all future arrivals after the tagged customer are routed directly to the main queue, the queue is expected to be depleted by approximately $\nicefrac{\mu-\lambda}{\nu}$. This reflects a behavioral tendency where customers incorporate worst-case projections into their decision-making process, assuming adverse future developments when estimating their expected waiting cost. Such conservative anticipation aligns with the broader framework of bounded rationality, wherein individuals hedge against uncertainty by planning for unfavorable system dynamics. For instance, a customer might think that ``if the queue keeps growing at this rate and coffee break starts soon, then it may be better to join now". Clearly, one can note that the customer anticipated the worst in the said example by imaging everyone herding towards the cafeteria. This behaviour is actually more common, see \cite{}. Further, in game-theoretic literature, the concept of pessimal anticipated utilities has been extensively employed, particularly to model the strategic foresight of players (customers). For instance, in coalition formation games, a deviating coalition often assumes that left over agents
can rearrange themselves in future in such a way that the
deviating coalition is maximally affected (see \cite{sultana2024cooperate}).

{\color{red}would anticipate the queue to deplete in times approximately and respectively  equal to  $\nicefrac{Q}{(\mu-\lambda)}$ and $\nicefrac{Q}{\mu}$,   if it  observes    $Q$ number of waiting customers upon arrival. 

In other words the incoming customer anticipates   the expected time for its (service) turn to be $\nicefrac{Q}{\mu}$ when it joins the queue.  The same can be  upper bounded by $\nicefrac{Q+L}{\mu-\lambda}$ when the customer joins the lounge with $L$ population. This  upper bound is tight (in fluid estimates) only if  all the future arrivals   join the  queue and if the $L$ population return to the queue before the tagged customer.  This brings us to the second important aspect of  the  bounded rational decisions of the customers. The arriving customers have to anticipate the actions of all the other relevant customers (the ones already waiting in lounge and those that might arrive while the tagged customer is in the system).}

The above estimates are accurate

additionally include an anticipation on the part of the customer -- 
 
While all above examples depict how customers use past and present observations to make conclusions, it is also true that customers anticipate the potential impact of future arrivals. For instance, a customer might think that ``if the queue keeps growing at this rate and coffee break starts soon, then it may be better to join now". Clearly, one can note that the customer anticipated the worst in the said example by imaging everyone herding towards the cafeteria. This behaviour is actually more common, see \cite{}. Further, in game-theoretic literature, the concept of pessimal anticipated utilities has been extensively employed, particularly to model the strategic foresight of players (customers). For instance, in coalition formation games, a deviating coalition often assumes that left over agents
can rearrange themselves in future in such a way that the
deviating coalition is maximally affected (see \cite{sultana2024cooperate}).

Motivated by the above-mentioned arguments,

We will see below that the above modeling framework allows us to incorporate imperfect cost estimation and strategic decision-making under uncertainty. 


\subsection{Estimated waiting cost in lounge}
Under the condition (iii) specified to capture the behavior of customers, it is clear that the length of the queue follows the following ordinary differential equation:
$$
\dot{Q} = -\mu+\lambda.
$$
Thus, the total reduction in queue length over a time interval of size $T$ is $(\mu - \lambda) T$. Notice that since the time spent in the lounge is exponentially distributed with rate $\nu$, the expected reduction in queue length by the time an agent returns is $\nicefrac{(\mu - \lambda)}{\nu}$. Thus, if the state of the system is $(Q,L)$ at the time of arrival for a customer, then, the anticipated queue length upon return is:
\begin{align}
    \left ( Q +L - \frac{\mu-\lambda}{\nu} \right )^+.
\end{align} 
The second term ($L$) in the above appears due to condition (ii) specifying the behavior of the customers.
Consequently, the expected cost for a customer choosing $a = 2$ is given by:
\begin{align}
    C(a = 2; Q,L) = \frac{\beta}{\nu} + \frac{\alpha}{\mu} \left ( Q +L - \frac{\mu-\lambda}{\nu} \right )^+.
\end{align} 
Here, $\nicefrac{\beta}{\nu}$	
  represents the cost incurred while waiting in the lounge, while the second term accounts for the anticipated waiting cost upon return, considering the worst-case estimated queue length. 

%

\subsection{Estimated waiting cost in queue}
The computation of waiting cost in the queue does not require any approximation. A customer who chooses $a = 1$ waits on average for $Q/\mu$ units of time before its turn to be served, when the state of the system at its arrival is $(Q, L)$. Thus, the expected waiting cost for such a customer is:
\begin{align} 
C(a = 1; Q,L) &= \frac{\alpha Q}{\mu}. \label{eqn_firstcostL} 
\end{align}}

Thus given the  state $(Q,L)$ 
at the arrival,  the concerned customer makes an independent decision
depending upon  two estimated waiting costs  \eqref{eq_estimatedcost1} and \eqref{eq_estimatedcost2}:  joins the lounge if and only if (with tie-breaker in favor of $a = 1$):
\begin{align}
\label{Eqn_Customer_decision} 
C(a=1) > C(a=2)
\end{align}


\subsection{Decision rule of customers}
The state dependent comparison of the two anticipated costs \eqref{eq_estimatedcost1} and \eqref{eq_estimatedcost2} describe  the dynamic decisions of the customers. 
We consider this analysis in the immediate next, which indicates  that the customers adopt a two-dimensional \textit{threshold-based policy}:

\hide{
These values represent:

\begin{itemize}
    \item \textbf{Threshold \( A \) (Queue Length Threshold):}  
    This threshold determines when the \textbf{queue becomes too long}, prompting customers to opt for the lounge instead of joining the queue immediately. If the number of customers in the queue exceeds \( A \), the estimated waiting cost in the queue becomes larger than the lounge waiting cost, making the lounge the preferred choice.
    
    \item \textbf{Threshold \( B \) (Lounge Occupancy Threshold):}  
    This threshold defines the maximum number of customers in the lounge before \textbf{customers start preferring the queue over the lounge}. If the number of customers in the lounge exceeds \( B \), the anticipated congestion upon returning from the lounge is too high, leading customers to directly enter the queue instead.{\color{red} the functions need monotonicity}
\end{itemize}
}

\begin{thm} \label{lemma_threld}{\bf [Response of customers]}
    The optimal decision rule $a^*$ for an arriving customer based on the anticipated costs \eqref{eq_estimatedcost1} and \eqref{eq_estimatedcost2} is given by:
\begin{align}\label{eqn_threshold}
a^* = 
\begin{cases}
    2, \mbox{ if } Q > A \mbox{ and } L < B, \\
    1, \mbox{ otherwise,}
\end{cases}
\end{align}where the constants:
\begin{equation}
    A = \frac{\mu \beta}{\alpha \nu} \mbox{ and }
    B = \frac{\mu -\lambda}{\nu}- A.
\end{equation} 

\end{thm}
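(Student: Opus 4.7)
The plan is to argue directly from the inequality \eqref{Eqn_Customer_decision}, which becomes
\[
\alpha\frac{Q}{\mu} \;>\; \frac{\beta}{\nu} \;+\; \frac{\alpha}{\mu}\!\left(Q+L-\frac{\mu-\lambda}{\nu}\right)^{\!+}.
\]
Set $D := (\mu-\lambda)/\nu$ and split on the sign of the argument of the positive part, since the $(\cdot)^+$ is the only nonsmooth piece.

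First I would handle the case $Q+L > D$, where the positive part is active. Substituting $(Q+L-D)$ and canceling the common $\alpha Q/\mu$ yields $\alpha(D-L)/\mu > \beta/\nu$, equivalently $L < D - \mu\beta/(\alpha\nu) = B$. In this regime the decision does not depend on $Q$, but I will also note that $Q+L>D$ combined with $L<B$ forces $Q > D-L > D-B = A$; conversely, $Q+L\le D$ combined with $L<B$ is fine for the next case. Next I would handle $Q+L \le D$, where $C(a=2)=\beta/\nu$; the customer prefers the lounge iff $\alpha Q/\mu > \beta/\nu$, i.e., $Q > A$. Moreover, in this regime $L \le D-Q < D-A = B$ whenever $Q>A$, so the condition $L<B$ is automatically satisfied.

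Combining the two cases gives both implications of the equivalence: (i) if the inequality \eqref{Eqn_Customer_decision} holds, then $Q>A$ and $L<B$, and (ii) if $Q>A$ and $L<B$, then whichever of the two cases we are in, the corresponding inequality derived above holds, so \eqref{Eqn_Customer_decision} is satisfied. Together with the tie-breaking convention that prefers $a=1$, this yields the rule \eqref{eqn_threshold}.

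The calculations are largely routine algebra; the main care-point is the case analysis around the kink at $Q+L=D$. In particular, one must verify that the joint condition $\{Q>A\}\cap\{L<B\}$ (rather than either threshold alone) is the correct description on both sides of the kink, and that the thresholds derived from the two cases are mutually consistent, i.e., the value of $L$ that makes $\alpha(D-L)/\mu = \beta/\nu$ matches $D - A$. A small sanity remark I would add is that $B$ may be negative when $\rho$ is close to $1$ or $\beta$ is large; in that regime $\{L<B\}=\emptyset$ and the rule correctly collapses to always joining the queue.
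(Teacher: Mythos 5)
Your proof is correct and follows essentially the same route as the paper: split on whether the positive part in $C(a=2)$ is active (i.e., on the sign of $Q+L-(\mu-\lambda)/\nu$), reduce each case to one of the two threshold inequalities, and observe that the other threshold condition is automatically implied in that case. Your added remarks on the consistency of the two thresholds at the kink and on the collapse of the rule when $B\le 0$ are correct but not needed beyond what the paper already records.
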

\begin{proof}
%
First consider 
$Q +L\le \nicefrac{(\mu - \lambda)}{\nu}$, so  $C(a=2) = \nicefrac{\beta}{\nu}$.  
 The customer joins the lounge only if,
$  Q > \nicefrac{\mu \beta}{\alpha \nu}$, see   \eqref{Eqn_Customer_decision}; 
 also observe  $L < B$. Now, let
$Q +L > \nicefrac{(\mu - \lambda) }{\nu}$. Then,   the customer joins the lounge if,

\vspace{-2mm}
{\small$$
    \frac{\alpha Q}{\mu} > \frac{\beta}{\nu}+\frac{\alpha}{\mu} \left(Q +L- \frac{\mu - \lambda}{\nu}\right)   \implies  L < B; 
$$}
and now observe, $
    Q > \nicefrac{\mu \beta}{\alpha \nu}.
$
\end{proof}
Thus   a strategic customer chooses to enter the lounge only when the queue is sufficiently long ($Q>A$) and the lounge is not overly crowded ($L<B$). 
This is not a very surprising result, nonetheless,  interestingly,  this is obtained after careful consideration of bounded rational and pessimal attributes  of the customers. The threshold policy is depicted pictorially in \autoref{fig:threshold}.

\begin{figure}[htbp]
\includegraphics[trim = {0.4cm 0.9cm 0cm 0.6cm}, clip,scale=.45]{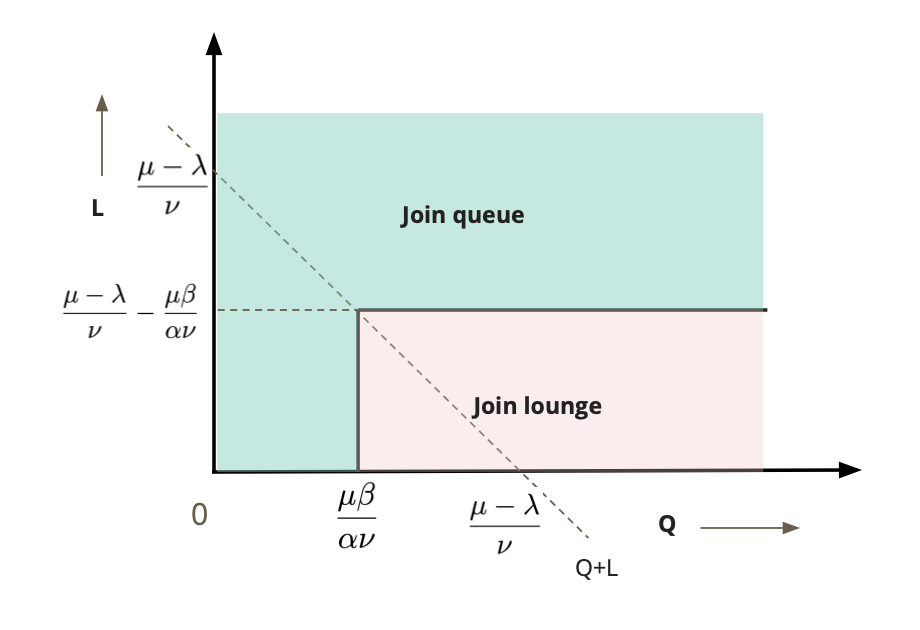}
     \caption{Decision-rule for bounded-rational customers}
    \label{fig:threshold}
    \end{figure}

\begin{figure*}
\hspace*{8pt}
    \begin{minipage}{0.6\textwidth}
        \flushleft
    \includegraphics[trim = {1.3cm 0cm 3cm 0cm}, clip,scale=.16]{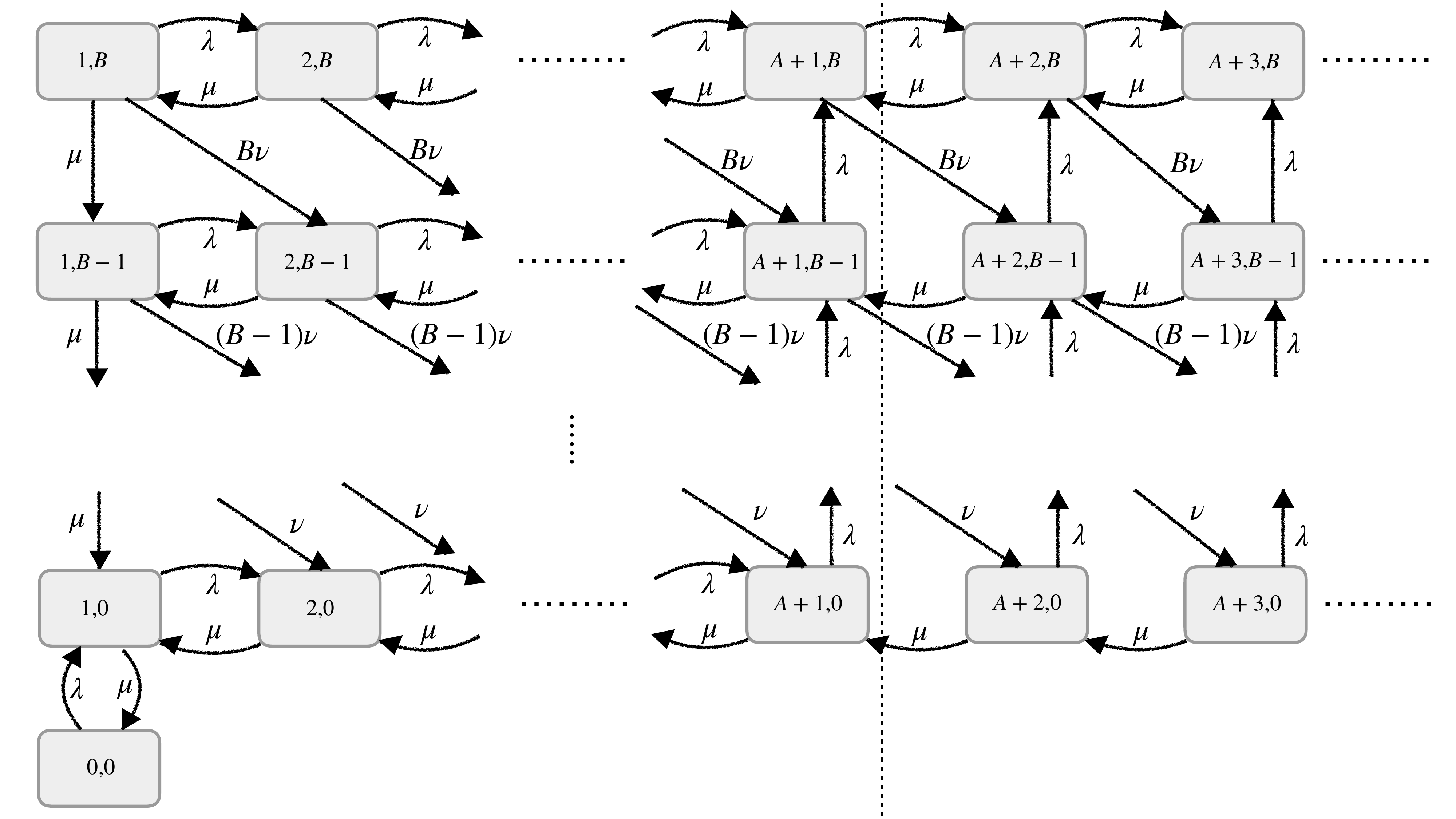}
    \caption{Transition rate diagram for the system with LF}
    \label{fig:transition}
    \end{minipage}%
    \hspace{-8mm}
    \vspace{-4mm}
    \begin{minipage}{0.4\textwidth}
    \vspace{1.4cm}
        \flushright
        \includegraphics[trim = {14cm 0cm 14cm 9cm}, clip, scale = 0.16]{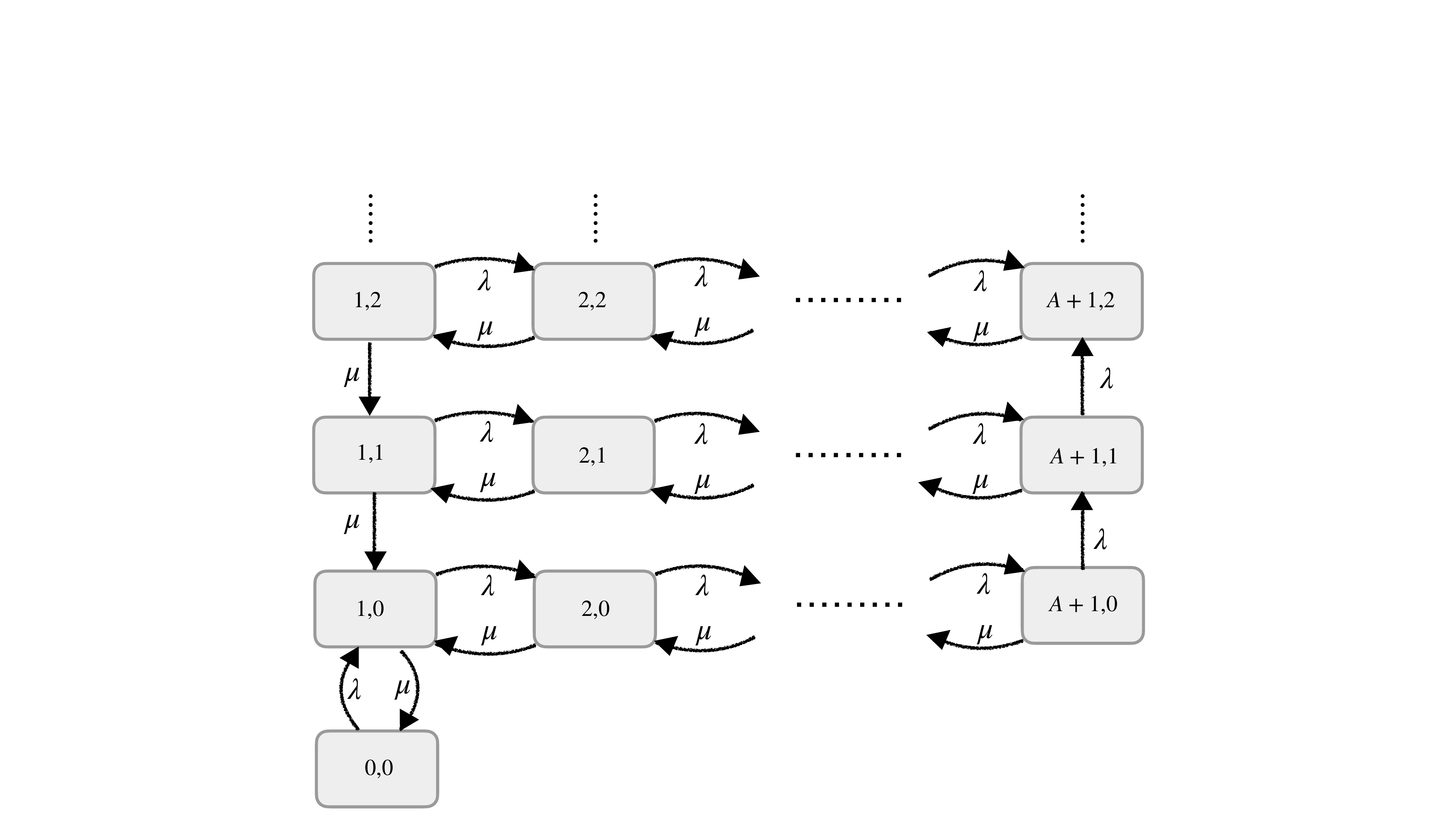}
        \caption{Transition rate diagram for the approximating system}
        \label{fig:transition2}
    \end{minipage}
\end{figure*}


\subsection{LF for higher load factors: customer perspective}
A key metric in queueing systems is the load factor $\rho$, which represents the number of customers arriving in one service period.
As already mentioned, 
we would require $\rho < 1$ for stability (see  section \ref{sec_analysis}). 
\textit{The viability of LF option is probably a more important question for systems} with large congestion costs, that is for  the systems \textit{with high load factors.} Thus we now investigate this aspect from customers' perspective, while  discussion on system perspective is postponed to section \ref{sec_loungedesign}. 
To be precise, we  investigate if an arriving customer would join the  lounge when $\rho$ is close to $1$, which is answered in the immediate next.


\begin{corollary}
    Let $\Delta := 1-\nicefrac{\beta}{\alpha}$. 
 The customers always choose to join the queue  if $\rho \ge \Delta.$
    
\end{corollary}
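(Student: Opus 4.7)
The plan is to derive the corollary directly from the threshold characterization in Theorem~\ref{lemma_threld}. By that theorem, an arriving customer chooses the lounge ($a^* = 2$) if and only if the observed state $(Q, L)$ satisfies both $Q > A$ and $L < B$, where $A = \mu\beta/(\alpha\nu)$ and $B = (\mu - \lambda)/\nu - A$. Since $L$ is always a non-negative integer, the condition $L < B$ can hold for some admissible state only if $B > 0$. Hence to prove that customers always join the queue it suffices to show $B \le 0$ under the hypothesis $\rho \ge \Delta$.

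The key step, then, is a short algebraic manipulation. I would substitute the definitions to rewrite
\begin{equation*}
B = \frac{\mu - \lambda}{\nu} - \frac{\mu \beta}{\alpha \nu} = \frac{\mu}{\nu}\left(1 - \rho - \frac{\beta}{\alpha}\right) = \frac{\mu}{\nu}\bigl(\Delta - \rho\bigr).
\end{equation*}
Since $\mu/\nu > 0$, we have $B \le 0$ iff $\rho \ge \Delta$. Under the hypothesis, therefore, $B \le 0$, and so the inequality $L < B$ has no feasible solution for any realizable lounge occupancy $L \ge 0$.

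I would then conclude by invoking the decision rule \eqref{eqn_threshold}: since the lounge-selection condition fails in every state, the otherwise branch applies and the customer selects $a^* = 1$ at every arrival instant, which is precisely the claim.

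I do not anticipate any real obstacle here; the result is essentially a reinterpretation of the threshold $B$ from Theorem~\ref{lemma_threld}. The only mild subtlety is noting that $L \ge 0$ is integer-valued (in fact, bounded below by zero suffices), so strict inequality $L < B$ with $B \le 0$ is infeasible; this rules out the lounge action deterministically, not merely on average.
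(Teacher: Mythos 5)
Your proof is correct and follows essentially the same route as the paper: the paper's proof simply observes that $B \le 0$ under the hypothesis and invokes Theorem~\ref{lemma_threld}, and your argument fills in exactly that algebra via $B = \tfrac{\mu}{\nu}(\Delta - \rho)$ together with the infeasibility of $L < B$ for $L \ge 0$.
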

\begin{proof}
    Clearly, $B \leq 0$; hence, the result by   Theorem~\ref{lemma_threld}.
\end{proof}
Thus, for  high load factors ($\rho \geq \Delta$) all the customers join the queue, regardless of the queue-lounge occupancy levels at their arrival instance.  
From equation \eqref{eq_first},  not much reduction in the congestion levels is anticipated by the customer (while they relax in lounge), and hence possibly the reason for this 
counter-intuitive result.
However the customers can be lured to LF by increasing the comfort levels in the lounge, i.e., by reducing $\beta$ and hence  increasing $\Delta$. In all, at high load factors, customers may accept LF only if  it is perceived to be exceptionally comfortable.

Before proceeding further, we would like to make an important remark. Constants $A$ and $B$ of Theorem \ref{lemma_threld} are real numbers, however without loss of generality, one can consider them as the following integers, as queueing system evolve over integers:
\begin{eqnarray}
  A =\lfloor\nicefrac{\mu \beta}{\nu \alpha}\rfloor, \mbox{ and } B=\lceil \nicefrac{(\mu-\lambda)}{\nu}- A\rceil.
\end{eqnarray}





 \hide{Clearly  the threshold value \( B \) for lounge capacity remains positive if and only if:
\begin{equation}
    \mu - \lambda > \frac{\mu \beta}{\alpha}, \nonumber
\end{equation}
which can be rewritten in terms of the \textbf{service utilization factor} \( \rho \) as:
\begin{equation}
    \rho = \frac{\lambda}{\mu} < 1 - \frac{\beta}{\alpha}. \nonumber
\end{equation}
This condition establishes an upper bound on system utilization required for maintaining a threshold-based control mechanism. If the service utilization \( \rho \) bounded as above,
then \( B \geq 0 \), ensuring that a well-defined lounge threshold exists, and customers can strategically choose between the queue and the lounge based on real-time system conditions.
However, if the service utilization exceeds this bound, i.e., if:
\begin{equation}
    \rho \geq 1 - \frac{\beta}{\alpha}, \nonumber
\end{equation}
then \( B < 0 \), which implies that the lounge ceases to be a viable waiting alternative.}

\section{Analysis}\label{sec_analysis}
This section studies the continuous-time Markov chain (CTMC)  representing the underlying queuing system. The state space  is given by $\mathbb{S} := \{(Q, L): Q, L \in \mathbb{Z}^+\cup\{0\}\}$. Under the threshold policy given in Theorem \ref{lemma_threld}, when $L = B$, only two possibilities can occur: (i) a new customer arrives and joins the queue, or (ii) a previously waiting customer in the lounge returns to the queue. Notice that the lounge size can not exceed $B$ in either scenario. Thus, at most $B$ customers can use the infinitely capacitated lounge due to the decisions of the strategic customers. Therefore, the state space reduces to the following:
$$
\mathbb{S} = \{(Q, L): Q\in \mathbb{Z}^+\cup\{0\}, L \in \{0, 1, \dots, B\}\}.
$$



Let $(x, y)$ be the state of the system. Then, the rate at which the system transitions into the new state $(x', y')$ is given by $r_{(x,y) \to (x', y')}$, where: i) $r_{(0,0) \to (1,0)} = \lambda$, $r_{(1,y) \to (1,y-1)} = \mu$, 
    $r_{(1,y) \to (0,y)} = 0$ (by \ref{a1}), for all $y > 0$; 
(ii) for all $x \geq 1$, 
\begin{align*}
    r_{(x,y) \to (x', y')} &= 
    \begin{cases} 
        \mu, &\text{if } x'=x-1, y'=y, y \geq 0\\
        y\nu, &\text{if } x'=x+1, y'=y-1, y > 0,
    \end{cases};
\end{align*}and (let $\indc{x > A,  y<B}=\xi$),
\begin{align*}
    r_{(x,y) \to (x', y')} &= 
    \begin{cases}
        \lambda \xi, &\text{if } x'=x, y'=y+1, \\
         \lambda(1-\xi), &\text{if } x'=x+1, y'=y, y \geq 0.
    \end{cases}
\end{align*}

\hide{(ii) for all $x \geq 1$, 
\begin{align*}
    r_{(x,y) \to (x', y')} &= 
    \begin{cases} 
        \lambda, &\text{if } x'=x+1, y'=y, y \geq 0\\
        \mu, &\text{if } x'=x-1, y'=y, y \geq 0\\
        y\nu, &\text{if } x'=x+1, y'=y-1, y > 0,
    \end{cases};
\end{align*}and (iii) for all $x> A, y< B$,
\begin{align*}
    r_{(x,y) \to (x', y')} &= 
    \begin{cases}
        \lambda, \text{ if } x'=x, y'=y+1, \\
        0, \text{ if } x' = x+1, y' = y.
    \end{cases}
    \end{align*}}
The second relation in the above holds by  Theorem \ref{lemma_threld} and all other rates can be easily verified from  the system description. The transition diagram of the CTMC is provided  in \autoref{fig:transition}.

Next, we derive the stationary distribution for the underlying process. Towards this, we first make an important observation. 

\subsection{M/M/1 connection and stability}\label{subsec_MM1}
Under assumption \ref{a1}, the queue is never empty if the lounge is occupied. Therefore, the total number of customers in the queue and lounge, i.e., $Q+L$, exactly equals the number of customers in an  M/M/1 queue. Consequently, under \ref{a1} and \ref{a2}, we have two important outcomes:
\begin{enumerate}
    \item[(i)] the system with LF is stable and possesses a unique   stationary distribution $\pi$;
    \item[(ii)] let $\pi_{i, j}$ represent the stationary probability of  $i$ and $j$ customers waiting in the queue and the lounge respectively; then the stationary probability of having exactly $n$ customers in the system (regardless of their individual locations) is given by:
    \begin{eqnarray}
    \hspace{-4mm}
       \pi(Q+L = n) =  \sum_{l \le B} \pi_{n-l, l}  =  \rho^n (1-\rho)  \mbox{ for all } n.\label{eqn_prob_n_cust}
    \end{eqnarray}
\end{enumerate}

\subsection{Stationary distribution for $B=1$} \label{staionary_dist}

From Theorem  \ref{lemma_threld},  for high load factors, which  is of particular importance to us, the value of $B=\lceil\nicefrac{(\mu-\lambda)}{\nu}-A\rceil$ is small and close to $0$; the stationary analysis of such a system  (equivalently with $B=1$) will be instrumental in further analysis (in section~\ref{sec_loungedesign}) and hence is considered here (ironically the analysis is simple enough only for $B=1$, see sub-section \ref{subsec_approx}
for approximate analysis with $B > 1$).


From \eqref{eqn_prob_n_cust},  the stationary probability $\pi_{q, 1}+\pi_{q+1,0}$ of the  total number of customers in the system equals that in M/M/1  
queue for any~$q$. It remains to derive the relation between the two terms.  As $q \to \infty$, one might expect the ratio between the two of them (i.e., $\pi_{q+1, 0}, \pi_{q, 1}$) to converge to a constant. Formalizing this intuition, we set $\pi_{q+1,0} = c \pi_{q,1}$ for all large enough $q$ (specifically for $q \geq A+1$) and attempt to solve the balance equations, which for this sub-case are given by (see Figure \ref{fig:transition}, with $\theta :=\lambda+\mu+\nu$):
\begin{eqnarray*}
      \pi_{q+2,0} &=& \frac{\theta}{\mu}\pi_{q+1,0}-\frac{\nu}{\mu}\rho^{q+1}(1-\rho) , \mbox{ and}\\
    \pi_{q+1,1} &=& \frac{\theta}{\mu}\pi_{q,1}-\frac{\lambda}{\mu}\rho^{q}(1-\rho), \mbox{ for } q \geq A+1. 
\end{eqnarray*}
By substituting $ \pi_{q+1,0} = c \pi_{q,1}$ relation into the above balance equations, we found that our intuition is indeed correct and $c$ is uniquely given by $\nicefrac{\nu}{\mu}$ due to uniqueness of the stationary distribution (argued in sub-section \ref{subsec_MM1}). To summarize, we have:
\begin{align}
    \frac{\pi_{q+1,0}}{\pi_{q,1}}=\frac{\nu}{\mu}, \mbox{ for } q \geq A+1. \label{eqn_ratio}
\end{align}
Using \eqref{eqn_ratio} and \eqref{eqn_prob_n_cust} with $n = q+1$, we have the following stationary probabilities for all $q \geq A+1$:
\begin{align}
    \pi_{q+1,0}= \frac{\rho^{q+1}(1-\rho)\nu}{\nu+\mu} \mbox{ and }
    \pi_{q,1}= \frac{\rho^{q+1}(1-\rho)\mu}{\nu+\mu}. \label{eq_statprob1}
\end{align}
We are now left to derive the stationary probabilities for $q < A+1$.  First, we focus on the case where $l = 1$. For this sub-case, the balance equations are given by (see Figure \ref{fig:transition} for $q\in \{2, \cdots, A\}$,
\begin{eqnarray}
\theta 
\pi_{q,1} =\mu \pi_{q+1,1}+ \lambda \pi_{q-1,1},  \mbox{ and, }  \theta\pi_{1,1} = \mu \pi_{2,1} . 
\end{eqnarray}
Such recursive equations can be solved using standard techniques, for example using  \cite[equation (1), pg. 4]{rrecursionbook}  we obtain the following:
\begin{align}
    \pi_{q,1}&=m_{q-1}\pi_{1,1}, \label{eq_statprob2}
\end{align}where the constant $m_q$ is defined  recursively as follows:
\begin{align*}
    m_q=\frac{\theta}{\mu}m_{q-1}-\rho m_{q-2} \mbox{ with }m_1=\frac{\theta}{\mu} \mbox{ and } m_0=1.
\end{align*}
The above recursion is solved and the solution is given by:
\begin{align}
    m_{q}&= \frac{\frac{\theta}{\mu}-\beta}{\alpha-\beta} \alpha^q+\frac{\alpha-\frac{\theta}{\mu}}{\alpha -\beta} \beta^q, \mbox{ for}\nonumber\\
\alpha&=\frac{\frac{\theta}{\mu}+\sqrt{\left(\frac{\theta}{\mu}\right)^2-4\rho}}{2} \mbox{ and } \beta=\frac{\frac{\theta}{\mu}-\sqrt{\left(\frac{\theta}{\mu}\right)^2-4\rho}}{2}. \nonumber
\end{align}
Now, using \eqref{eq_statprob1} and \eqref{eq_statprob2} for $q = A+1$, we obtain $\pi_{1,1}$ as:
\begin{align}\label{eqn_pi11}
    \pi_{1,1}= \frac{\rho^{A+2}(1-\rho)\mu}{(\nu+\mu)m_{A}};
\end{align}one can then substitute the above in \eqref{eq_statprob2} to obtain $\pi_{q, 1}$ for all $1 < q \leq A+1$.

For $l=0$  and $q \le A+1$ from \eqref{eqn_prob_n_cust}, we have
\begin{align}
    \pi_{q,0}&=\rho^q(1-\rho)-\pi_{q-1,1}.\label{eq_lasteq}
\end{align}
Using \eqref{eq_statprob1}-\eqref{eq_lasteq}, the 
stationary distribution for the $B=1$ can be summarized as follows (where $\psi := \nicefrac{(1-\rho)}{(\mu + \nu)}$):
\begin{align}\label{eqn_stn_dist_B1}
    \pi_{0,0}&=(1-\rho), \ \ \pi_{1,0}=\rho(1-\rho), \ \ \pi_{0,1}=0 \mbox{ (by \ref{a1}),}\nonumber \\
    {\pi}_{q,0}&= 
    \begin{cases}
        \rho^q(1-\rho)-\frac{m_{q-2}\rho^{A+2}\psi\mu}{m_{A}}, &\mbox{for } 2 \le q < A+2,\\
         \rho^{q}\psi\nu, &\mbox{for }  q \ge A+2, \mbox{ and}
    \end{cases}\nonumber \\
    {\pi}_{q,1}&= 
    \begin{cases}
     \frac{m_{q-1}\rho^{A+2}\psi\mu}{m_{A}}, &\mbox{for } 1 \le q < A+1,\\
         \rho^{q+1}\psi\mu, &\mbox{for } q \ge A+1. 
    \end{cases}
\end{align}

\hide{
\subsubsection{For $B=2$}
Suppose the customers decide to use the lounge facility only if there is at most one customer in the lounge (i.e., $L < B = 2$), then the stationary distribution can again be derived following similar approach (see, in particular, \eqref{eqn_ratio}) as for $B=1$. The exact expressions and necessary details for its derivation are deferred to the sub-section \ref{subsec_B2}.

For higher values of $B$, the analysis is further complicated due to two reasons: (i) the MC grows vertically with respect to $B$, see \autoref{fig:transition}, and (ii) the structure of the MC changes significantly when queue size exceeds $A+1$ (horizontal component in \autoref{fig:transition}) due to the strategic decision-making of the customers. In majority of the literature on open retrial queueing systems, either the vertical height of the MC is fixed and small (like in \cite{wang2013strategic}), or the MC is symmetric in the horizontal component (like in \cite{kerner2020strategic}). None of these points is true in our study, so the MC must be tackled separately for $q \geq A+1$ and $q \leq A+1$ (as shown for $B=1, 2$). Clearly, this approach becomes complex as $B$ becomes larger. Therefore, we do not indulge further in computing stationary distribution when $B=3, 4, \dots$; however, we shift our focus to an important regime of practical interest. Interestingly, we will observe that the said regime results in $B \to \infty$ thereby spanning the analysis from smaller values of $B$ to exorbitantly large ones. 
}

%


\subsection{Small $\nu$-approximation}\label{subsec_approx}
\hide{
\begin{figure*}
    \begin{minipage}{0.6\textwidth}
        \flushleft
    \includegraphics[trim = {1cm 6cm 3cm 7cm},scale=.35]{piA=3B=1.pdf}
    \caption{Stationary dist. for A=3, B=1}
    \label{fig:transition}
    \end{minipage}%
    \hspace{-19mm}
    \begin{minipage}{0.4\textwidth}
        \flushright   \includegraphics[ trim = {1cm 6cm 3cm 7cm},scale = .35]{picturestatioA=37,B=3.pdf}
\caption{Stationary dist. for A=37, B=3}
        \label{fig:transition2}
    \end{minipage}
\end{figure*}}

\hide{{\color{red}Justification ---  It is clear $\nu$ has to be much smaller than $\mu-\lambda$, basically the queue length should reduce significantly in expected sense by the time of return from Lounge.  We precise use this description to derive approximate analysis for the JQL system; we provide the performance of the system when $\nu \to 0$  with $\nicefrac{\beta}{\nu}$ kept fixed. 
Interestingly   the system   obtained  at this  limit   exactly  matches the JDS system (by force and offering better service, no need to wait in queue later). 
}}

In queueing theory, it is typical to consider fluid (where $\lambda, \mu \to \infty$ keeping $\rho$ fixed) or mean-field (considered when the number of queues or servers are exorbitantly large) approximations to understand the system. In our study, none of the said two regimes are valid, rather a new regime needs to be studied which arises due to the interactions among the customers and the system. In particular, our interest lies in understanding the system when the customers benefit from joining the lounge; this happens when customers find a smaller queue upon return than at arrival. Since customers expect the queue to decrease at approximately $(\mu-\lambda)$ rate, significant reduction within time $\nicefrac{1}{\nu}$ occurs only  if $\nicefrac{(\mu-\lambda)}{\nu} \gg 1$, i.e., if $\nu \ll \mu - \lambda$. Recall that such arguments lead to the optimal customer response in Theorem \ref{lemma_threld}. We focus on this scenario from the system's perspective, using the following approximation:
\begin{align}\label{eqn_cond_nu0}
    \nu \to 0 \mbox{ and } \nicefrac{\beta}{\nu} = \eta,
\end{align}where $\eta > 0$ is a constant; note $B \to \infty$ by above.

When $\nu \to 0$, customers  wait for extremely long times in the lounge with high probability, which would normally deter lounge use. To counter this, we maintain a fixed ratio $\nicefrac{\beta}{\nu}$, allowing $\beta \to 0$ (as a function of $\nu$) at the same rate as $\nu$. This ensures that customers feel extremely comfortable while waiting in the lounge, making them willing to join despite potentially longer waiting times.

Our goal in this sub-section is to study the queueing system with LF near the regime \eqref{eqn_cond_nu0}. Towards this, note that if one varies $\nu$ (and set $\beta = \nu \eta$), keeping every other parameter constant, then each value of $\nu$ leads to a different system with $A = \lfloor\nicefrac{\eta\mu}{\alpha}\rfloor$. Our conjecture is that the performance of the original system parameterized by $\nu$ can be approximated by the system where $\nu = 0$ (henceforth referred to as the `\textit{approximating system}') and $A = \lfloor\nicefrac{\eta\mu}{\alpha}\rfloor$:

\begin{figure*}[htbp]
\vspace*{15pt}
    \begin{minipage}{0.33\textwidth}
        \centering
        \includegraphics[trim = {1cm 0cm 3cm 0.5cm}, clip, scale = 0.16]{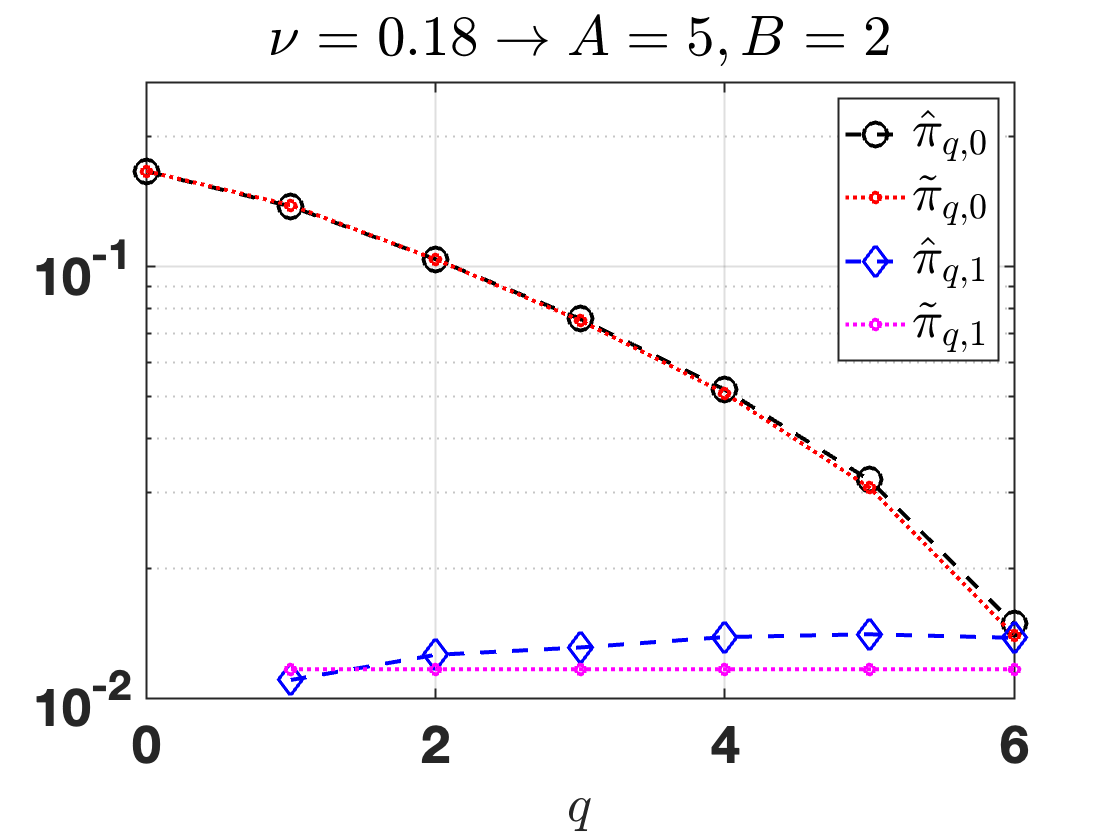}
    \end{minipage}%
    \begin{minipage}{0.33\textwidth}
        \centering
        \includegraphics[trim = {1cm 0cm 3cm 0.5cm}, clip, scale = 0.16]{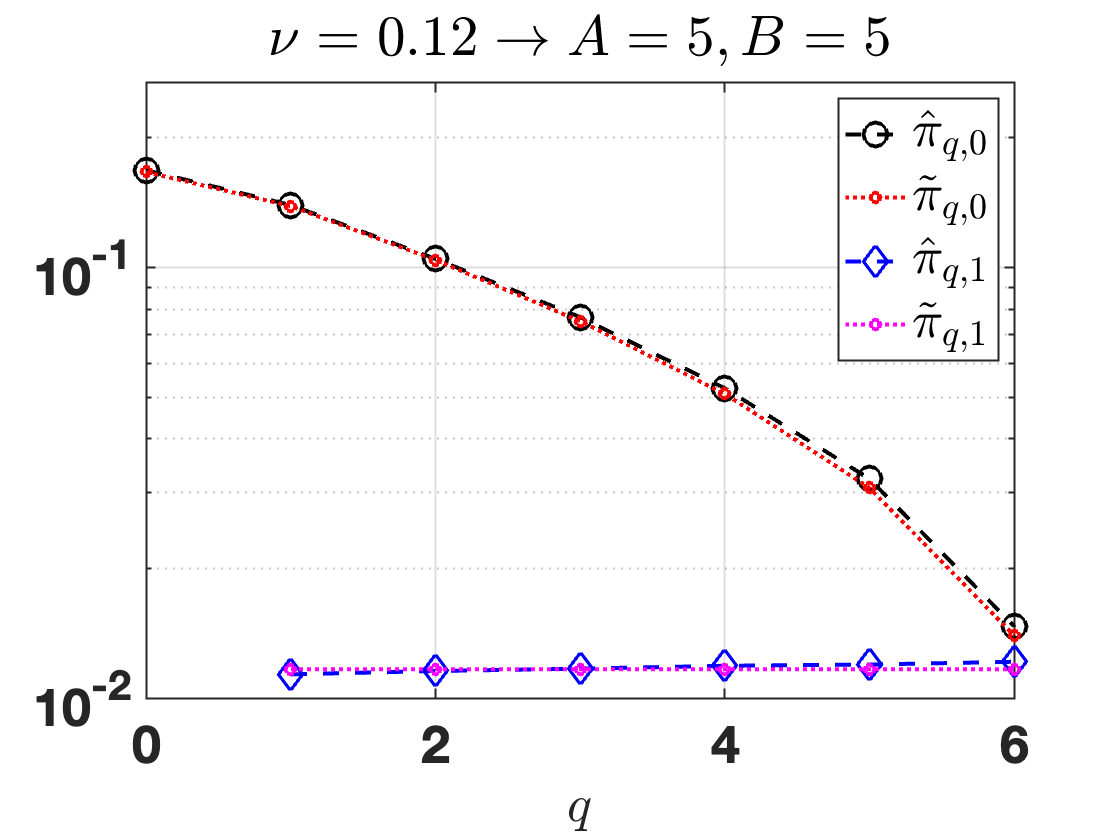}
    \end{minipage}%
    \begin{minipage}{0.33\textwidth}
        \centering
        \includegraphics[trim = {1cm 0cm 3cm 0.5cm}, clip, scale = 0.16]{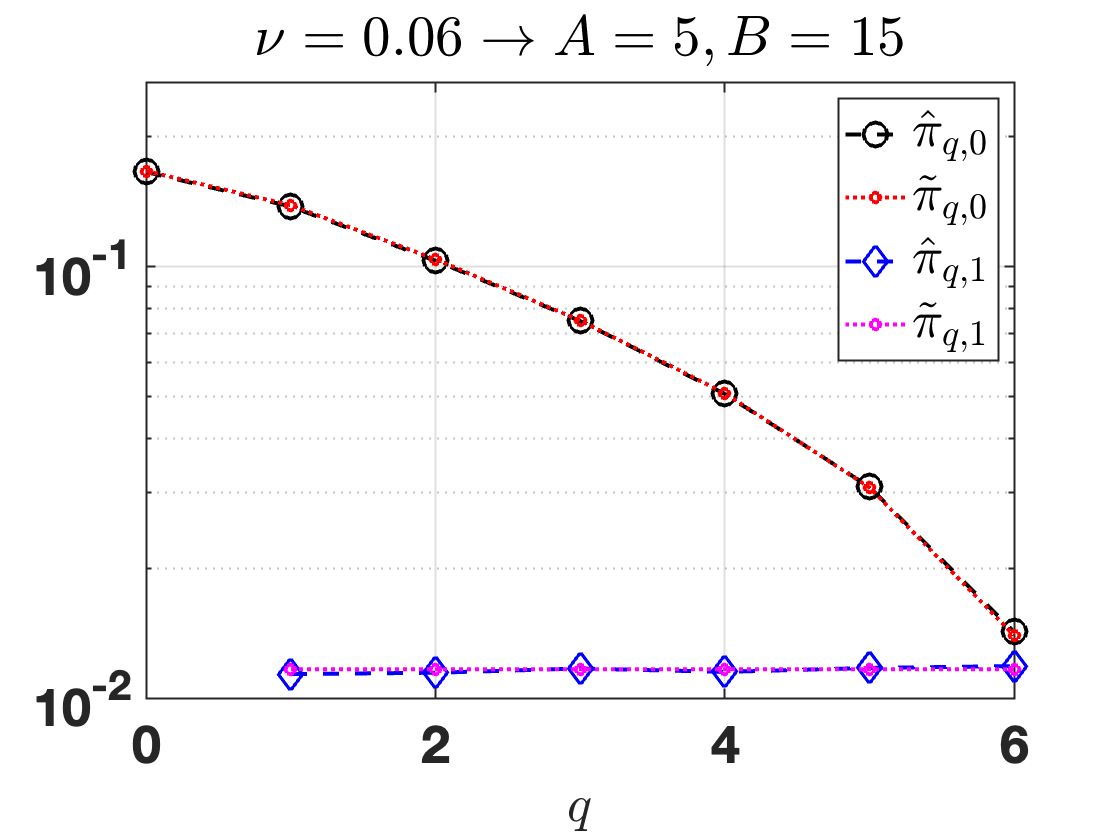}
    \end{minipage}
    \caption{Approximation of $\bpi^{(\nu)}$ improves as $\nu \to 0$; depicted in `log scale' for $l = 0,1$  }
    \label{fig:approx}
\end{figure*}
\begin{conjecture}\label{conj}
    Let $\bpi^{(\nu)} := (\pi_{q, l}^{(\nu)})_{q,l}$ represent the stationary distribution for the system with LF when the transition rate from the lounge to the queue is $\nu$. Then, the following holds:
    \begin{align*}
       \hspace{0.5cm} \sup ||\bpi^{(\nu)} - \bpi^{(0)}|| \to 0, \mbox{ as } \nu \to 0 \mbox{ and } \nicefrac{\beta}{\nu} = \eta. \hspace{0.5cm} \mbox{ \eop}
    \end{align*}
\end{conjecture}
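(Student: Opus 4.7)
The plan is to combine the M/M/1 connection from Section~\ref{subsec_MM1} with a tightness plus balance-equation argument. Embed each $\bpi^{(\nu)}$ in the common state space $\mathbb{Z}_+ \times \mathbb{Z}_+$ by setting $\pi^{(\nu)}_{q, l} = 0$ for $l > B(\nu)$. The M/M/1 identity $\pi^{(\nu)}(Q+L = n) = \rho^n(1-\rho)$ holds for every admissible $\nu \ge 0$, so $\sum_{q+l \ge N} \pi^{(\nu)}_{q, l} \le \rho^N$ independent of $\nu$. This yields uniform tightness, and by a diagonal-extraction argument any sequence $\nu_k \downarrow 0$ (with $\beta_k = \nu_k \eta$) admits a pointwise limit $\bpi^{*}$ which is itself a probability distribution.

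The next step is to identify every such limit as $\bpi^{(0)}$. Fix a state $(q, l)$ and write the CTMC balance equation at $(q, l)$. The only rates involving $\nu$ are $\nu l$ (outflow from $(q, l)$) and $\nu (l+1)$ (inflow from $(q-1, l+1)$); with $l$ fixed both tend to $0$ as $\nu \to 0$. Since the scaling $\beta = \nu \eta$ keeps $A = \lfloor \eta \mu / \alpha \rfloor$ constant, the threshold structure of Theorem~\ref{lemma_threld} is preserved throughout the limit, so the remaining $\lambda$- and $\mu$-terms converge to the corresponding rates of the $\nu = 0$ system depicted in Figure~\ref{fig:transition2}. Hence $\bpi^{*}$ satisfies the balance equations of the approximating system. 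A separate argument establishes existence and uniqueness of $\bpi^{(0)}$: assumption~\ref{a1} confines $Q \le A+1$ once any lounge customer has ever been routed, the chain is irreducible on $\{(Q,L): Q \ge 1\} \cup \{(0,0)\}$ (any state drains via A.1 once the queue empties), and the M/M/1 connection gives positive recurrence since $\mathbb E^{(0)}[Q+L] = \rho/(1-\rho) < \infty$. Thus $\bpi^{*} = \bpi^{(0)}$, and since every subsequence shares this same limit, the full family converges pointwise.

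Upgrading pointwise convergence to the supremum norm uses the uniform tail bound: given $\varepsilon > 0$, choose $N$ with $\rho^N < \varepsilon/2$. On the finite set $\{(q, l): q + l < N\}$ pointwise convergence is automatically uniform; on the complement both $\pi^{(\nu)}_{q,l}$ and $\pi^{(0)}_{q,l}$ are dominated by $\rho^N$, so their difference does not exceed $\varepsilon$. This delivers $\sup_{(q,l)} |\pi^{(\nu)}_{q,l} - \pi^{(0)}_{q,l}| \to 0$.

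The main obstacle I anticipate is the care needed because $B = B(\nu) \to \infty$, so the state space is not fixed. Although $\nu l \to 0$ for each fixed $l$, the aggregate lounge-to-queue flux $\nu \sum_l l\, \pi^{(\nu)}_{q, l}$ is not obviously small \emph{a priori}. The resolution is again the M/M/1 bound $\mathbb E^{(\nu)}[L] \le \rho/(1-\rho)$ uniformly in $\nu$, which forces $\nu \mathbb E^{(\nu)}[L] \to 0$ and justifies interchanging limit and summation in any global balance identity. Establishing this uniform negligibility (and ruling out mass escaping to infinity in $L$) is the technical heart of the argument.
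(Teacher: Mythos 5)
The paper does not actually prove this statement: it is left as a conjecture, with only the one-line indication that ``the proof \ldots{} follows by the Maximum Theorem on compact Banach spaces'' and a promise to include it in a journal version. Your proposal is therefore a genuinely different (and more explicit) route: instead of a parametric-continuity/Berge-type argument, you give a direct probabilistic proof --- uniform geometric tails from the M/M/1 identity \eqref{eqn_prob_n_cust}, tightness and subsequential extraction, identification of every limit point through the local balance equations (where the only $\nu$-dependent rates are $l\nu$ and $(l+1)\nu$ at a \emph{fixed} state and hence vanish, while the threshold structure of Theorem~\ref{lemma_threld} stabilizes because $A$ is $\nu$-independent and $B(\nu)\to\infty$), and finally uniqueness of the stationary distribution of the $\nu=0$ chain. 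This buys something the paper's sketch does not: the bound $\sum_{q+l\ge N}\pi^{(\nu)}_{q,l}=\rho^N$ is uniform in $\nu$, which is exactly what rules out escape of mass as the lounge layer grows unboundedly and what upgrades pointwise to sup-norm convergence; a bare appeal to the Maximum Theorem would still have to supply this compactness input. Two small corrections to your write-up: the confinement $Q\le A+1$ in the limit system comes from the threshold rule of Theorem~\ref{lemma_threld} with $B=\infty$ (arrivals are diverted to the lounge whenever $Q>A$), not from assumption~\ref{a1}, which only governs the lounge-to-queue move when the queue empties; and the recurrent class of the $\nu=0$ chain is $\{(q,l): 1\le q\le A+1\}\cup\{(0,0)\}$, the states with $Q>A+1$ being transient --- your identification step should invoke the standard fact that a probability solution of the global balance equations puts no mass on transient states. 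Your worry about the aggregate flux $\nu\sum_l l\,\pi^{(\nu)}_{q,l}$ is largely moot for the local balance equations (each involves a single lounge inflow term), but your resolution via $\nu\,\mathbb{E}^{(\nu)}[L]\le \nu\rho/(1-\rho)\to 0$ is correct and is what one needs for any cut/global identity. With these repairs the argument is sound.
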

The proof for the above follows by the Maximum Theorem on compact Banach spaces. We skip the proof in this paper (and plan to include it in the journal version of this work). 

In \autoref{fig:approx}, we validate the above conjecture by considering $
\lambda = 6, \mu = 7.2, \alpha = 0.45 \mbox{ and } \eta = 0.35$. Here, we plot the stationary probabilities for the approximating system (denoted by $\widetilde{\pi}_{q,l}$, exactly computed below in \eqref{eqn_stn_prob_approx}) and the Monte Carlo estimates of the same for the original system with $\nu > 0$ (denoted by $\widehat{\pi}_{q,l}$) for $l = 0,1$. One can easily verify that the approximation improves as $\nu$ decreases (and thus, as $B$ increases). However, it is equally crucial to note that the approximation is sufficiently well even when $B$ is small; we use this observation to derive further analysis in sub-section \ref{subsec_num_expt}. We shall see below that $\widetilde{\pi}_{q,l} = 0$ for $q > A+1$ and, thus, such probabilities are not representable on the log scale; for this reason, only the states $(q,l)$ with $q \leq A+1$ are depicted in the figure.

In light of the Conjecture \ref{conj} and above arguments, we restrict our attention to the \textit{approximating system} and analyze the same in detail. Here, the decision rule is modified as follows under Theorem \ref{lemma_threld} (note $B = \infty$):
\begin{align}\label{eqn_threshold_nu0}
a^* = 2 \cdot 1_{\{Q > A\}}, \mbox{ with } A = \lfloor\nicefrac{\eta\mu}{\alpha}\rfloor.
\end{align}

In view of the above decision-rule,
interestingly, the dependency on the lounge congestion disappears. So happens because even if a customer has to wait for a long time in the lounge, it perceives no discomfort; thus, it only has to judge based on the queue length at its arrival instant. In particular, if the queue is long ($Q > A$), it is optimal for a customer not to join the queue. Thus, all states $(Q, L)$ with $Q > A+1$ are transient. For this reason and the fact that $B \to \infty$, the state space for the approximating system is given by:
$$
\mathbb{S}^{(0)} = \{(Q, L): Q\in \{0, 1, \cdots A+1\}, L \in \mathbb{Z}^+\cup\{0\}\},
$$
and the MC simplifies as shown in \autoref{fig:transition2}. It is immediate to see that the balance equations for the approximating system is given by:
\begin{align}
\lambda \pi_{0,0}&=\mu \pi_{1,0}, \label{eqn_approx1}\\
(\lambda+\mu)\pi_{1,l}&=\mu(\pi_{2,l}+\pi_{1,l+1})+ \lambda \pi_{0,0}\indc{l = 0}, \label{eqn_approx2}\\
(\lambda+\mu)\pi_{q,l}&=\lambda\pi_{q-1,l}+\mu \pi_{q+1,l}, \mbox{ for } 2 \le q\le A, \label{eqn_approx3}\\   (\lambda+\mu)\pi_{A+1,l}&=\lambda \pi_{A,l}+\lambda\pi_{A+1,l-1}\ \indc{l > 0}.\label{eqn_approx4}
\end{align}

%
%

In order to find the stationary distribution $(\widetilde{\pi}_{q,l}) := (\pi_{q,l}^{(0)})$ of the approximating system, we handle one layer at a time with respect to $l$ (see \autoref{fig:transition2}). Let us begin with the bottom layer with $l = 0$; here, we derive the $(\widetilde{\pi}_{q,0})$ in three steps:
\begin{enumerate}
    \item[(i)] we note the stationary probabilities for the extreme left states to be $\widetilde{\pi}_{0,0} = 1-\rho$ and $\widetilde{\pi}_{1,0}=\rho(1-\rho)$ (by \eqref{eqn_prob_n_cust}).
    \item[(ii)]  using \eqref{eqn_approx3} and \eqref{eqn_approx4}, we express all $\widetilde{\pi}_{q,0}$, for $1 \leq q \leq A$, in terms of the stationary probability for the extreme right state $(A+1, 0)$, i.e., $\widetilde{\pi}_{A+1,0}$ as follows:
    \begin{align}
        \widetilde{\pi}_{q,0}&=\sum_{i=0}^{A-q+1}\frac{1}{\rho^i} \widetilde{\pi}_{A+1,0}, \mbox{ for all } 1 \le q \le A. \label{eqn_approx_l0}
    \end{align}
    \item[(iii)] we solve for the value of $\widetilde{\pi}_{A+1,0}$ using \eqref{eqn_approx_l0} with $q = 1$:
\begin{align}
    \widetilde{\pi}_{A+1, 0} = \frac{\rho^{A+1}(1-\rho)^2}{1-\rho^{A+1}}, \label{eqn_pi_Aplus1_l0}
\end{align}and then substitute this value in \eqref{eqn_approx_l0} to derive $\widetilde{\pi}_{q,0}$ for all $2 < q \leq A$.
\end{enumerate}
Now, let us consider the second layer with $l = 1$. Here again, we follow the same procedure as above (with slight modifications to cater for the transitions coming from the bottom layer, see \autoref{fig:transition2}). Since $\widetilde{\pi}_{1,1}+\widetilde{\pi}_{2,0} = \rho^{2}(1-\rho)$ (from \eqref{eqn_prob_n_cust}), therefore, the stationary probability for the extreme left state when $l=1$ is  (see \eqref{eqn_approx_l0} and \eqref{eqn_pi_Aplus1_l0}):
\begin{align}
    \widetilde{\pi}_{1,1}=\frac{\rho^{A+2}(1-\rho)^2}{1-\rho^{A+1}}.\label{eq_pi_11_l1}
\end{align}For the rest (i.e., $1 \le q \le A$), we express each $\widetilde{\pi}_{q,1}$ in terms of $\widetilde{\pi}_{A+1,1}$ and $\widetilde{\pi}_{A+1,0}$ (corresponds to the extreme right states of layers with $l=1, 0$ respectively):
\begin{align}
    \widetilde{\pi}_{q,1}=\sum_{i=0}^{A-q+1}\frac{1}{\rho^i}\widetilde{\pi}_{A+1,1}-\sum_{i=0}^{A-q}\frac{1}{\rho^i}\widetilde{\pi}_{A+1,0}. \label{eq_l=2pi}
\end{align}
Clearly, $\widetilde{\pi}_{A+1,1}$ is obtained using above equation with $q = 1$, \eqref{eq_pi_11_l1} and \eqref{eqn_pi_Aplus1_l0}:
\begin{align}
    \widetilde{\pi}_{A+1,1}=\frac{\rho^{A+2}(1-\rho)^2}{1-\rho^{A+1}}.
\end{align}The probabilities $\widetilde{\pi}_{q,1}$ for $1<q<A+1$ can now be easily obtained. 

The procedure depicted above for $l=0,1$ can essentially be followed for higher values of $l$ by using  the following general relationship for any $1 \le q <A+1$ and $l \ge 1$:
\begin{align}
    \widetilde{\pi}_{q,l}&=\sum_{i=0}^{A-q+1}\frac{1}{\rho^i}\widetilde{\pi}_{A+1,l}-\sum_{i=0}^{A-q}\frac{1}{\rho^i}\widetilde{\pi}_{A+1,l-1}.
\end{align}

Finally, the stationary distribution for the approximating system can be summarized as follows:
\begin{align}\label{eqn_stn_prob_approx}
    \begin{aligned}
        \widetilde{\pi}_{0,0}&=1-\rho, \ \
        \widetilde{\pi}_{A+1,l}=\frac{\rho^{A+l+1}(1-\rho)^2}{1-\rho^{A+1}} \mbox{ for } l \geq 0,\\
        \widetilde{\pi}_{q,l}&=
            \begin{cases}
                \frac{\rho^{A+l+1}(1-\rho)^2}{1-\rho^{A+1}}, &\mbox{ for } l > 0, \\
                \frac{(1-\rho)\rho^q}{1-\rho^{A+1}}\left(1-\rho^{A+2-q}\right), &\mbox{ for } l = 0,
            \end{cases}
    \end{aligned}
\end{align}
for all $1 \leq q < A+1$.

\vspace{2mm}
\noindent \textit{Practical relevance of approximating system:} 
Recall that in the approximating system, customers wait indefinitely long in the lounge, receiving service only when\footnote{Since our system possesses the characteristics of M/M/1 queue, thus, a customer which enters the system has to leave.} the queue becomes empty. This scenario mirrors realistic situations like waiting for visa interviews or doctor consultations in a lounge. To our knowledge, such a system remains unstudied.

Specifically, our approximating system resembles one where customers have two options: (1) enter a queue and wait on a FCFS basis, or (2) enter a lounge where the longest-waiting customer is called directly to the server by the controller when the server becomes idle. Customers choosing the second option wait exclusively in the lounge without entering the conventional queue.

We have partially\footnote{In the analysis, we have not implemented the FCFS policy in the lounge.} studied this new system assuming customers accept the second option with no hesitation. However, as we did for the original system, it is essential to investigate customers' behavioral responses to this delay-sensitive routing policy in future research.

\section{Lounge design problem}\label{sec_loungedesign}
We now examine the problem from the perspective of the system who needs to make two key decisions: (i) is it beneficial to design an LF?, and (ii) what is the optimal value of comfort that should be provided to the customers if the LF is made?
One might anticipate that the LF can help alleviate congestion  problem, more so at high load conditions. On the contrary, interestingly, we will show in this section that it is not always beneficial to incorporate an LF even if one ignores the monetary expenses to be incurred for designing the lounge.

Towards this, we consider a two-level optimization problem or a Stackelberg  (SB) problem by designating the system as the leader and the customers as the followers. The leader chooses either to provide an LF with an appropriate comfort factor $\eta := \nicefrac{\beta}{\nu} $, or not to provide it at all. We depart from the standard SB framework by capturing the response of the  customers  (at the lower level) directly via the customer-response  parameters $A=A(\eta) = \lfloor\nicefrac{\eta \mu}{\alpha}\rfloor$ and $B=\lceil\nicefrac{(\mu-\lambda)}{\nu}-A \rceil $ derived in Theorem \ref{lemma_threld}.

The stationary distribution $\bpi = \{\pi_{q,l}\}_{q,l} =:\bpi(A(\eta)) $ of the system-level Markov chain $(Q_t, L_t)$   depends upon the customer response parameter $A(\eta)$ and  determines the long-run average of the congestion cost to be incurred by the system.
 Let $G(\eta)$ represent this  congestion cost defined  in terms of the weighted ($\omega > 0$) stationary second moments of the queue and lounge occupancy  levels: 
\hide{In other words one can view s the response of the customers for the choice $\eta$ of the system and then  thus $G(\eta) = G(\bpi (A(\eta) ) $ represents the congestion cost:

The design of the lounge is characterized by selecting the  comfort parameter  $\eta$, which influences both the system's monetary investment and the resultant congestion cost and budget management. This problem can be formulated as a bi-level optimization problem, or more precisely, as a Stackelberg game. However, in this setting, exceptionally we rely on the customer response function provided by Theorem \ref{lemma_threld} at the lower level.

Let  $M(\eta)$  denote the monetary cost incurred by the system to design a lounge with comfort level $\eta$. Similarly, let $G(\eta)$  denote the total congestion cost experienced in the system including the lounge for a given $\eta$. Customer behavior in response to $\eta$  is captured by:
\begin{align}
    A(\eta) = \eta \frac{\mu}{\alpha}, \quad B = \frac{\mu - \lambda}{\nu} - A(\eta),
\end{align}
as derived in Theorem \ref{lemma_threld}. These parameters govern the dynamics of the system's Markov process $(Q_t, L_t)$, whose resultant stationary distribution  determines the congestion cost. 

In other words one can view $\bpi = \{\pi_{ql}\}_{q,l} =:\bpi(A(\eta)) $ as the response of the customers for the choice $\eta$ of the system and then }
\begin{eqnarray}\label{eqn_cost}
    G(\eta) = \sum_{q,l}  (q^2 + \omega l^2) \pi_{q,l} (A(\eta) ).\label{eq_loungeproblem}
\end{eqnarray}
With no-LF option, 
$
G_o := \sum_q q^2 \rho^q (1-\rho)
$ equals 
the congestion cost as the system is the standard M/M/1 queue without LF. 
Now, the solution to the lounge design problem can be defined either as the solution $\eta^*$  of  the following optimization problem  (if the optimal value $G^* \le  G_o$) or as the no-LF option (if $G_o$ is smaller):
\begin{eqnarray}
G^* := \min \left \{  G(\eta)  \ \big | \  \eta > 0 
    \mbox{ and }  \nicefrac{(\mu-\lambda)}{\nu} - A(\eta) > 0 \right \} . \label{Eqn_G_star}
\end{eqnarray} 
The second constraint in the above ensures that $B > 0$; thus, by Theorem \ref{lemma_threld}, $\eta$ chosen as per \eqref{Eqn_G_star} ensures that the LF parameterized by $\eta$ is acceptable to the customers, i.e., 
$$
P_{\bpi (\eta)} (L > 0) = \sum_{(q,l), l > 0 } \pi_{q,l} (A(\eta))  > 0 .
$$
Observe that the congestion cost \eqref{eqn_cost} depends on $\eta$ only via $A(\eta)$; in other words, it is sufficient to repose the  constrained problem \eqref{Eqn_G_star} with $A$  as the design parameter:
\begin{eqnarray}
G^* =  \min  \left \{ G(A) : A \in \left \{0, 1, \cdots, \lceil\nicefrac{(\mu-\lambda)}{\nu}-1\rceil \right \}    \right \};\nonumber
\end{eqnarray}we denote the optimizer of the above problem as $A^*$. Basically, the choice of comfort factor $\eta$ translates now to that of the occupancy limit in lounge, $B$ (which is negatively related to $\eta$). Thus, we finally solve the following problem:
\begin{eqnarray}
\min\left \{ G^*, \ G_o  \right \}. \label{eq_loungedesign}
\end{eqnarray}
 

\subsection{At high load factors  } \label{subsec_high_load}
We consider a regime of parameters that satisfy $\mu -\lambda \le \nu$. This regime can result if   the system is operating under high load conditions, i.e., $\rho \approx 1$ (note $\rho \geq 1-\nicefrac{\nu}{\mu}$).
From Theorem~\ref{lemma_threld}, for such scenarios, the LF will be acceptable to a positive fraction of customers only when 
$B > 0$, or equivalently when 
$\eta < \nicefrac{(\mu-\lambda)\alpha}{\nu \mu}$. 
Thus, for the specified regime, the lounge design problem \eqref{eq_loungedesign} simplifies to a problem with a binary choice for the system --- choose no-LF (i.e., $B=0$) or choose the LF with $B=1$ (recall $B \leq \lceil\nicefrac{(\mu-\lambda)}{\nu}\rceil \leq 1$) and thus, $A = 0$.  Using \eqref{eqn_stn_dist_B1} with $A=0$, we obtain (recall $G_o  = \sum_{q} q^2 \rho^q(1-\rho)$):
%
\begin{align*}
G(A=0) = \frac{(\nu + \rho \mu ) }{\nu+\mu} G_o+ \frac{ \rho^2\mu}{\nu+\mu}\left(\omega + \frac{1-\rho}{\rho}  \right)  .
\end{align*}
Then, the difference  between $ G(A=0)$ and $G_o$ is:
\begin{align*}
 G(A=0)-G_o    = \frac{  \rho^2\mu  }{\mu+\nu}  \left (\omega  -  \overline{\omega} \right ), \mbox{ where } \overline{\omega} := \frac{3-\rho}{1-\rho}. 
 \end{align*}
Thus, we have  proved the following (recall $\rho <1$):
\begin{thm}{\bf [Optimal LF]} \label{thrm_optimalLF}
Assume ${\mu-\lambda}\leq\nu$. It is optimal for the system to provide the LF with $A^*  = 0$, when $\omega < \overline{\omega}$. Otherwise, it is better not to design the lounge.    \eop
\end{thm}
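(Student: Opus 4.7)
The first move is to exploit the hypothesis $\mu-\lambda\le\nu$ to reduce the set of admissible designs. Since $(\mu-\lambda)/\nu\le 1$, the constraint $B=\lceil(\mu-\lambda)/\nu-A\rceil>0$ in \eqref{Eqn_G_star} forces $A=0$ and $B=1$. Hence the outer minimum \eqref{eq_loungedesign} is just a binary comparison between $G_o$ (no LF) and $G(A=0)$ (LF with $A=0$, $B=1$), and the claim reduces to computing $G(A=0)-G_o$ and reading off its sign.

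To evaluate $G(A=0)$, I would specialize the stationary distribution \eqref{eqn_stn_dist_B1} at $A=0$. The ``middle'' branches $2\le q<A+2$ and $1\le q<A+1$ are vacuous, so only the boundary states and the two geometric tails survive:
\begin{align*}
\pi_{0,0}&=1-\rho,\ \ \pi_{1,0}=\rho(1-\rho),\\
\pi_{q,0}&=\rho^{q}\psi\nu\ \ (q\ge 2),\qquad \pi_{q,1}=\rho^{q+1}\psi\mu\ \ (q\ge 1),
\end{align*}
with $\psi=(1-\rho)/(\mu+\nu)$. This puts $G(A=0)$ within reach of the closed form $\sum_{q\ge 0}q^2\rho^q=\rho(1+\rho)/(1-\rho)^3$, which also gives $G_o=\rho(1+\rho)/(1-\rho)^2$.

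Next I would compute the two second-moment contributions separately. The queue piece $\sum_{q,l}q^2\pi_{q,l}$ splits into a boundary term ($\rho(1-\rho)$ from $\pi_{1,0}$) plus two tail sums; after pulling out the factor $1/(\mu+\nu)$ and recognizing $\rho(1+\rho)/(1-\rho)^2$ as $G_o$, I expect the tidy form $G_o(\nu+\mu\rho)/(\mu+\nu)+\rho(1-\rho)\mu/(\mu+\nu)$ stated in the excerpt. The lounge piece is easier: because $B=1$ forces $l\in\{0,1\}$, $l^2=l$ and $\sum_{q,l}l^2\pi_{q,l}=\sum_{q\ge 1}\pi_{q,1}=\mu\rho^{2}/(\mu+\nu)$, a single geometric sum. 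Combining the two gives the expression for $G(A=0)$ already displayed before the theorem.

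The last step is the algebraic identity
\[
G(A=0)-G_o=\frac{\mu\rho^2}{\mu+\nu}\bigl(\omega-\overline\omega\bigr),\qquad \overline\omega=\frac{3-\rho}{1-\rho},
\]
which I would verify by substituting $G_o=\rho(1+\rho)/(1-\rho)^2$ into the expression for $G(A=0)-G_o$, clearing denominators, and checking that the $\omega$-independent part reduces to $-(3-\rho)\rho^2/[(1-\rho)(\mu+\nu)^{-1}\mu]$ (this is the one place where a careful but routine simplification is required; everything else is bookkeeping). Since the prefactor $\mu\rho^2/(\mu+\nu)$ is strictly positive, $G(A=0)<G_o$ iff $\omega<\overline\omega$, and the conclusion of the theorem follows. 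I do not foresee any genuine obstacle; the only mild care needed is to check that the branches of \eqref{eqn_stn_dist_B1} degenerate correctly at $A=0$ so that no terms are double-counted or omitted.
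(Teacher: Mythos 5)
Your proposal is correct and follows essentially the same route as the paper: reduce the design problem under $\mu-\lambda\le\nu$ to the binary comparison between $G_o$ and $G(A=0)$ with $B=1$, evaluate $G(A=0)$ from the $B=1$ stationary distribution specialized at $A=0$, and read off the sign of $G(A=0)-G_o=\frac{\mu\rho^2}{\mu+\nu}(\omega-\overline{\omega})$. The only blemish is the garbled denominator in your description of the $\omega$-independent remainder (it should simplify to $-\frac{\mu\rho^2(3-\rho)}{(\mu+\nu)(1-\rho)}$), which does not affect the validity of the argument.
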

The above theorem indicates that the system increasingly favors designing the LF under high load conditions, as $\overline{\omega}$ increases with $\rho$. Further, the LF needs to be loaded with facilities such that customers experience no discomfort while waiting there (as $A^* = 0$ implies $\beta = 0$). Interestingly, the customers use the LF only sparingly, i.e., only when lounge is empty ($L < B = 1$) and queue is non-empty ($Q > A = 0$). Overall, \textit{when load factor is high, the system should design an LF with capacity for only one customer such that it feels ultra comfortable there!}




\hide{With $A = 0$ and $B = 1$, the stationary distribution is given by:
\begin{align}
\pi_{0,0}=(1-\rho), \pi_{1,0}=\rho(1-\rho), \mbox{ for all } q\ge 1\\
    \pi_{q+1,0}= \frac{\rho^{q+1}(1-\rho)\nu}{\nu+\mu} \mbox{ and }
    \pi_{q,1}= \frac{\rho^{q+1}(1-\rho)\mu}{\nu+\mu}. \label{eq_statprob1}
\end{align}}


        
        
      

\subsection{Numerical analysis}   \label{subsec_num_expt}
Here we aim to further investigate the lounge design problem using numerical computations. Basically, the goal is to understand if the customers and the system prefer the LF and at which capacity. We set  $\mu = 2.5$ and $\nu = 0.1$. 

To begin with, in \autoref{fig:transition4} with $\omega = 1.2$, the congestion cost $G(A)$ (see \eqref{eq_loungeproblem}) is plotted against the customer response  parameter $A$. The computations for $G(A)$ are done using the stationary distribution derived for the approximating system, see \eqref{eqn_stn_prob_approx}. We consider $A \in 
 \left\{ 0,1, \ldots,  \lceil\nicefrac{(\mu - \lambda)}{\nu} -1\rceil \right\}
$ to ensure that the lounge, if designed by the system, is acceptable to the customers (see \eqref{Eqn_G_star}). 
The figure clearly indicates that the function $G(A)$ is convex for all values of $\rho$ and thus, has a unique minimizer $A^*$ at which it achieves the value $G^*$. Recall $G_o = \sum_q q^2 \rho^q (1-\rho)$ and equals $1.56, 4.21, 13.22$ for $\rho = 0.4, 0.55$ and $0.7$ respectively. It is now easy to see that $G^* < G_o$ for each $\rho$. Conclusively, for the underlying parameters, the system would prefer to design the LF (and customers would also prefer to use it).


\begin{figure}[htbp]
\flushright
     \begin{minipage}[b]{0.5\columnwidth}
     \flushleft
\includegraphics[trim = {0.5cm 1cm 2cm 1cm}, clip,scale=.11]{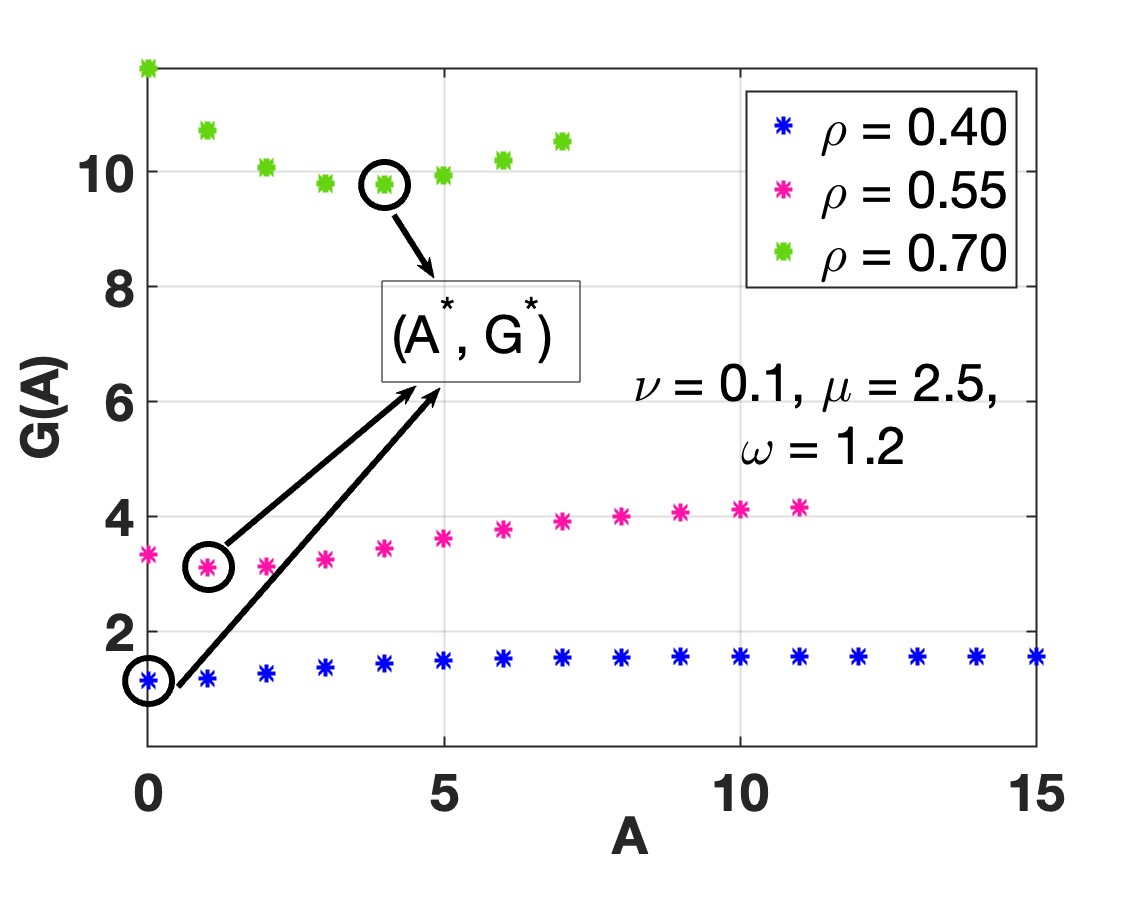}
    \caption{Congestion cost v/s $A$}
    \label{fig:transition4}
    \end{minipage}%
    \hspace{-3mm}
    \begin{minipage}[b]{0.5\columnwidth}
    \flushleft
    \includegraphics[trim = {0cm 0cm 2.4cm 1cm}, clip,scale=.117]{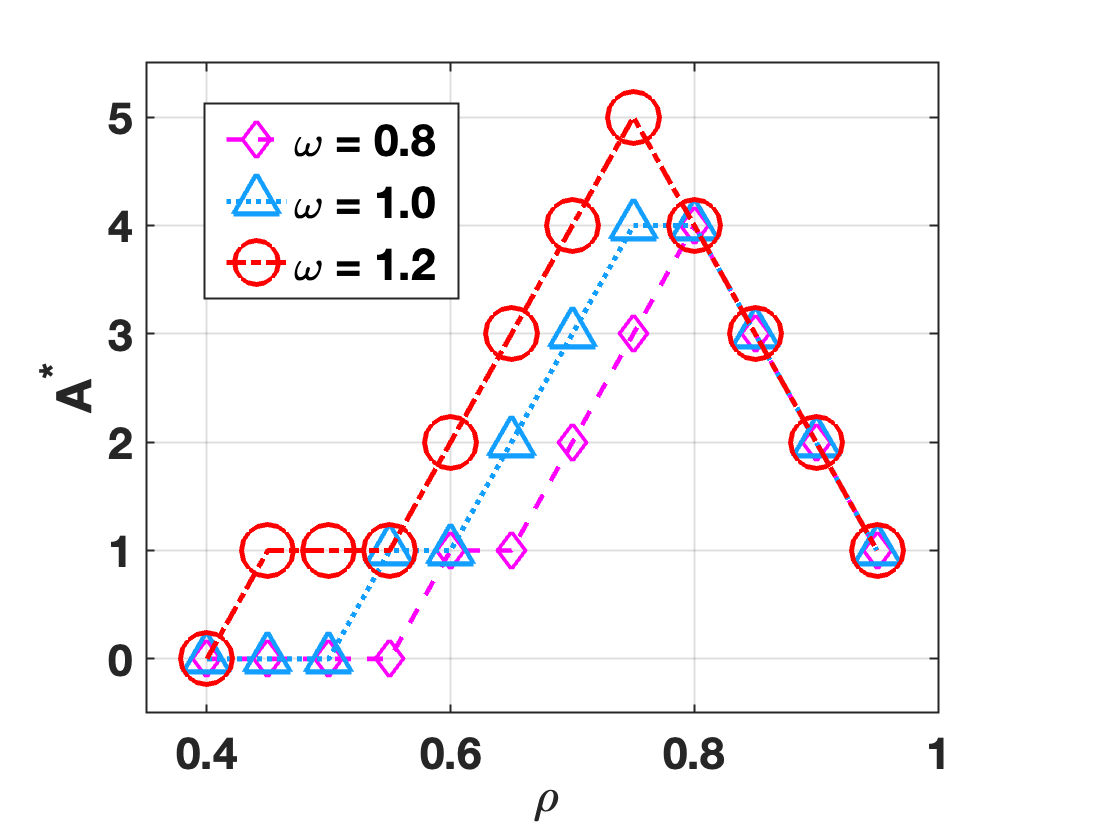}
    \caption{  $A^*$ v/s $\rho$}
         \label{fig:transition5}
         \end{minipage}
\end{figure}

In \autoref{fig:transition5}, 
we plot $A^*$ as a function of $\rho$ for different values of $\omega$. Fix a value of $\omega$ and observe that $A^*$ increases initially and then declines as a function of $\rho$. Importantly, for lower load factors, $A^*$ is small and this results in: (i) the system preferring to design a large LF, and (ii) the customers also being inclined to wait in the LF (as $B(A^*) =\lceil \nicefrac{(\mu-\lambda)}{\nu} - A^*\rceil$ is large). Surprisingly, these trends are reversed when $\rho$ is high (this observation perfectly aligns with the conclusion drawn from Theorem \ref{thrm_optimalLF}); so is true even when $A^*$ reduces for $\rho > 0.75$ because the first term in $B(A^*)$ reduces sharply while the second term decreases slowly with $\rho$.  

\autoref{fig:transition5} also confirms the intuition that if the system is more concerned about the congestion in the lounge (i.e., higher $\omega$) then it will design an LF with lower occupancy; note $A^*(\omega_1; \rho) \ge A^*(\omega_2; \rho)$ (and thus, $B(A^*)(\omega_1; \rho) \le B(A^*)(\omega_2; \rho)$) for $\omega_1 > \omega_2$ and fixed $\rho$.

\section{Conclusions}
Lounges are commonly seen nowadays in many settings and are probably introduced to reduce congestion in high-traffic conditions. Surprisingly, our work indicates a paradoxical behaviour: neither the system prefer to design, nor the customers prefer to use the lounge facility (LF) in the said regime. This result reemphasize the fact that adding resources may not always be optimal (as is seen in the Braess paradox).

Basically, we considered an M/M/1 queue with an additional LF that customers can use to wait and relax before moving to the queue. The decision to join the queue/lounge is based on pessimistic anticipation of future congestion. We show that the customers use the lounge when the queue is long and the lounge is relatively empty. 
While studying the lounge design problem from the system perspective, under high traffic conditions, we illustrate that the system would  be interested in designing the LF but surprisingly with a low
occupancy level. However, in such a regime, the customers are not too keen to wait in the lounge and use it only when it is empty. Unexpectedly, both customers and the system are in favor of a large lounge under low traffic conditions. These results are even more interesting as they hold when the system focuses only on the congestion and not the money. 

In future, we propose to study the system with more realistic irrational behaviors to investigate the impact of non-rationals on the equilibrium strategies of rationals.





\bibliographystyle{IEEEtran}
\bibliography{references}
\newpage
\hide{We begin by considering a queuing model where customers have the option to either directly join the service queue or enter a lounge area. Customers who choose the lounge wait for an exponentially distributed amount of time before rejoining the system to receive service according to a First-Come-First-Served (FCFS) discipline. However, under high-traffic conditions, this model yields a negative result: customers tend to avoid the lounge due to the relatively higher associated cost, thereby rendering the lounge option ineffective. Additionally, the model becomes analytically intractable for general values of the lounge capacity $B$. Therefore, we analyze the stationary distribution for small values of $B$, specifically $B = 1$  and  $B = 2$.

To facilitate tractable analysis, we next consider an approximation in the regime where $ \nu \to 0 $ and  $\beta \to 0$  such that the ratio $\nicefrac{\beta}{\nu}$ converges to a fixed constant. In this limiting case, the effective lounge capacity tends to infinity $( B \to \infty )$. 
As a result, the lounge rarely reaches its capacity limit, and the system behaves as if the lounge has infinite capacity. Thus, in the limiting case, we approximate the system by taking $B \to \infty$ significantly simplifying the analysis.) }


\hide{
{\color{blue} Key points:

One of the main purposes of lounge design is probably  to handle congestion  (in waiting room) at high load conditions. Our analysis illustrates that the system would also be interested in  designing such a facility but with low occupancy level (unless  the constraint on lounge congestion is high). However the customers are not too keen. They sparsely use it and at best  would  use the facility only when it is empty. 

\hide{\begin{enumerate}
    \item \textbf{Customer Response Based on Queue and Load Conditions:} \\
    Customers decide whether to utilize the lounge facility based on two main thresholds:
    \begin{itemize}
        \item \textit{Threshold:} If the main queue length is too high and the lounge occupancy is within an acceptable bound, customers prefer to enter the lounge.
        \item \textit{Load Factor Threshold (Comfort-Driven Behavior):} When the system load factor \( \rho = \frac{\lambda}{\mu} \) exceeds a threshold determined by a comfort parameter \( \eta \), customers choose not to enter the lounge if the lounge is not too comfortable. Otherwise, they enter with a strictly positive probability.
    \end{itemize}

    \item \textbf{High Load Factor Regime (\( \mu - \lambda < \nu \)):} \\
    In cases of high load and limited service slack:
    \begin{itemize}
        \item Providing a lounge is optimal from both the customer response and system performance perspectives, provided the weight assigned to lounge congestion cost exceeds a critical threshold.
        \item Interestingly, under such conditions, the optimal lounge occupancy limit \( B \) reduces to its minimal value of 1, indicating that even minimal lounge capacity can be effective if well-regulated.
    \end{itemize}

    \item \textbf{Low Load Factor Regime (\( \mu - \lambda \gg \nu \)) with High Customer Patience:} \\
    When traffic is light and customers are highly patient:
    \begin{itemize}
        \item The lounge facility remains beneficial if the system places sufficient weight on congestion cost in the lounge and system wants to provide lounge facility with high occupancy level and more customers also willing to use lounge.
        \item As the load factor increases within this regime, the optimal congestion control parameter \( A \) increases, which leads to a decrease in the lounge occupancy limit \( B \).
        \item Thus, under lower load conditions, the lounge can accommodate more customers, encouraging lounge use from both system optimization and customer incentive perspectives. But for higher load factor, neither system prefers to provide high lounge occupancy nor customers willing to use lounge facility. 
    \end{itemize}
\end{enumerate}}
Finally, we want to extend our analysis to a more realistic setting where a fraction of customers are non-rational. These customers do not optimize their decisions independently but instead imitate the choices made by others. Investigating the dynamics of such systems—and particularly how the behavior of non-rational customers influences the equilibrium strategies of rational agents—presents an intriguing direction for future study.}}


\hide{

}

 %
\newpage
\hide{\subsection{Kavitha: Theoretical results}
When
 $$
 \mu -\lambda > \nu
$$
One of the possible LF options are $A= \frac{\mu-\lambda}{\nu} - 1$ and $B = 1$.  Under this option, 
\begin{eqnarray*}
    E[Q^2 ]  &=& \sum_{q=1}^\infty q^2 \pi_{q1} +\sum_{q=1}^\infty (q+1)^2  \pi_{q+1 0}   + \pi_{10}  \\
    &=& \sum_{q=1}^\infty q^2 (\pi_{q1} + \pi_{q+1 0} ) + \sum_{q=1}^\infty (2q+1)  \pi_{q+1 0} + \rho (1-\rho) \\
    &=&  \rho G_o + \sum_{q=1}^\infty (2q+1)  \pi_{q+1 0} + \rho (1-\rho)
\end{eqnarray*}
while 
$$
E[L^2] = \sum_{i=1}^\infty \pi_{q1}
$$
and thus,
\begin{eqnarray*}
    E[Q^2 ] + E[L^2] =  \rho G_o + \rho (1-\rho) + 2 \sum_{q=1}^\infty  q  \pi_{q+1 0}  + \rho^2 
\end{eqnarray*}
and therefore $ E[Q^2 ] + E[L^2] - G_o $ equals:
\begin{eqnarray*}
    (\rho-1)  G_o + \rho (1-\rho) +    2 \sum_{q=1}^\infty  q  \pi_{q+1 0}  + \rho^2 \\
    \le (\rho-1)  G_o + \rho (1-\rho)   + 2  E_o[Q]  + \rho^2, 
\end{eqnarray*} where $E_o[Q]$ is expected queue length in MM1 queue. Thus for small enough $\rho$ this particular LF option is better than no-LF option. It is possible that one may find other LF options better -- but the conclusion is that LF facility is preferred by the system at low load factors (without considering monetary options). 

With generic $B$ value

{\small\begin{eqnarray*}
    E[Q^2] &=&  \sum_{l=0}^B \sum_{q=1}^\infty   (q+B-l)^2 \pi_{q+B-l, l}   + \sum_{q=1}^B \sum_{l=0}^{B-q} q^2 \pi_{ql}  \\
    &=& \sum_{q=1}^\infty  q^2 \rho^{B+q}  (1-\rho) + \sum_{l=0}^B \sum_{q=1}^\infty    (2q+B-l) (B-l)   \pi_{q+B-l, l} \\
    &&+ \sum_{q=1}^B \sum_{l=0}^{B-q} q^2 \pi_{ql}  \\
    &=& \rho^B G_o + \sum_{l=0}^B \sum_{q=1}^\infty    (2q+B-l) (B-l)   \pi_{q+B-l, l} \\
      &&+ \sum_{q=1}^B \sum_{l=0}^{B-q} q^2 \pi_{ql}  \\
\end{eqnarray*}}
\begin{eqnarray*}
    E[L^2] &=& \sum_{l=0}^B \sum_{q=1}^\infty   l^2 \pi_{q+B-l, l} 
\end{eqnarray*}

We have the following now:
{\small\begin{eqnarray*}
    E[Q^2] + E[L^2] - G_o  \ \ge \  (\rho^B - 1) G_o  \hspace{-40mm}
    \\
    && + \sum_{l=0}^B \sum_{q=A+B}^\infty    (2q+B-l) (B-l)   \pi_{q+B-l, l}
\end{eqnarray*}}

Now
{\small\begin{eqnarray*}
    \sum_{l=0}^B \sum_{q=A+B}^\infty    (2q+B-l) (B-l)   \pi_{q+B-l, l} \hspace{-40mm} \\
    &=&  \sum_{l=0}^B \sum_{q=A+B}^\infty    (2q+B-l) (B-l)  \rho^{q+B} c_l(1-\rho)  \\
    &\ge &  \sum_{q= A+B}^\infty  2q \rho^{q+B} (1-\rho)   \\
    &\ge&   2 \rho^{A+2B} \sum_{q=1} q\rho^q  (1-\rho)
    = 2 \rho^{A+2B} E_o[Q].
\end{eqnarray*}}

{\small\begin{eqnarray*}
    E[Q^2] + E[L^2] - G_o  \ \ge \  (\rho^B - 1) G_o
+ 2 C \rho^{A+2B} E_o[Q] \to V > 0
\end{eqnarray*}}  as $\rho \to 1$.

\newpage

\subsection{Stationary Distribution for $B = 2$ {\color{magenta}-to be verified and organized}}\label{subsec_B2}

Define, $\theta_i=i \nu+\lambda+\mu$. For this simpler case we have the following equations, by  \eqref{eq_l0q>A+1} and \eqref{eq_lBq>A+1} for $k>A+1$:
\begin{align}
      \pi_{k+2,0} &= \frac{\theta_1}{\mu}\pi_{k+1,0}-\frac{\nu}{\mu}(\pi_{k,1}+\pi_{k+1,0}) , \nonumber\\
    \pi_{k+1,1} &= \frac{\theta_1}{\mu}\pi_{k,1}-\frac{\lambda}{\mu}\pi_{k,0}-\frac{\nu}{\mu}\pi_{k-1,1}, \nonumber\\
    \pi_{k,2} &= \frac{\theta_2}{\mu}\pi_{k-1,2}-\frac{\lambda}{\mu}(\pi_{k-1,1}+\pi_{k-2,2}).
\end{align}
Let, for all  $q\ge A$ such that, $q+l \ge A+2$, $\pi_{q,l}$ is denoted as, 
\begin{align}
    \pi_{q,l}=c_l\rho^{q+l}(1-\rho), \mbox{ where, }
    \sum_{l=0}^2c_l=1. \label{eq_sumc_l}
\end{align}
Then we can get the relation between $c_l$'s is as follows:
\begin{align}
    c_{l-1}=\frac{l\nu}{\mu}c_l , \mbox{ for, } 1 \le l \le 2.\label{eq_c_lforvalue}
\end{align}
We get solution of $c_l$ from \eqref{eq_sumc_l} and \eqref{eq_c_lforvalue} is as follows:
\begin{align}
    c_l=\nicefrac{\frac{2!}{l!}\left(\frac{\nu}{\mu}\right)^{2-l}}{1+\frac{2\nu}{\mu}+\frac{2\nu^2}{\mu^2}}, \mbox{ for } 0 \le l \le 2
\end{align}
Then for $l=2$ and,  for all  $1 \le q\le A$,
\begin{align}
    \pi_{q,2}&=\rho^{q+2}(1-\rho)c_2m_{A-q}, \mbox{ where, }\nonumber \\
         m_{k}&= U \alpha^k+V \beta^k, \mbox{ where, }\nonumber\\
         U&=\frac{\frac{\theta_2}{\mu}-\rho-\beta}{\alpha-\beta}, V=\frac{\alpha-\frac{\theta_2}{\mu}+\rho}{\alpha -\beta}, \nonumber\\
\alpha&=\frac{\frac{\theta_2}{\mu}+\sqrt{\frac{\theta_2^2}{\mu^2}-4\rho}}{2}, \beta=\frac{\frac{\theta_2}{\mu}-\sqrt{\frac{\theta_2^2}{\mu^2}-4\rho}}{2} \mbox{ for all }k.\nonumber
\end{align}
For $l=1$ and $1\le q \le A+1$,
\begin{align}
    &\pi_{q,1}=\pi_{1,1}m'_{q-2}-\rho^3(1-\rho)c_2m_{A-1}m''_{q-2}\nonumber\\
    &-\rho^4(1-\rho)\frac{c_2\nu}{\mu}\left(U \alpha^{A-4}m'''_{q-4}+V \beta^{A-4}m''''_{q-4}\right) \label{eq_ql=1}
\end{align}
Where, 
\begin{align}
    m'_k&=\frac{\theta}{\mu}m'_{k-1}-\rho m'_{k-2}, \mbox{ with, }
    m'_1=\frac{\theta^2}{\mu^2}-\rho, m'_0=\frac{\theta}{\mu},\nonumber\\
    m''_k&=\frac{\theta}{\mu}m''_{k-1}-\rho m''_{k-2} \mbox{ with, } m''_1=\frac{\theta+\nu}{\mu}, m''_0=1, \nonumber\\
    m'''_k&=\frac{\alpha \theta}{\mu}m'''_{k-1}-\rho \alpha^2 m'''_{k-2}, \mbox{ with, }m''''_1=\frac{\alpha \theta}{\mu}+\rho, m'''_0=1,\nonumber\\
     m''''_k&=\frac{\beta \theta}{\mu}m'''_{k-1}-\rho \beta^2 m'''_{k-2}, \mbox{ with, }m''''_1=\frac{\beta \theta}{\mu}+\rho, m'''_0=1,
\end{align}
We know that,
\begin{align}
    \pi_{A+1,1}=\rho^{A+2}(1-\rho)c_1, \nonumber
\end{align}
Then substituting the value of $\pi_{A+1,1}$ in equation \eqref{eq_ql=1} we get the value of $\pi_{1,1}$ as follows:
\begin{align}
    \pi_{1,1}= \frac{1}{m'_{A-1}}\Big(\rho^{A+2}(1-\rho)c_1+\rho^3(1-\rho)c_2m_{A-1}m''_{A-1}\nonumber\\
    +\rho^4(1-\rho)\frac{c_2\nu}{\mu}\left(U \alpha^{A-4}m'''_{A-3}+V \beta^{A-4}m''''_{A-3}\right)\Big)\nonumber\\
\end{align}
For, $l=0$, and $1\le q \le A+1$,
\begin{align}
    \pi_{q,0}=\rho^q(1-\rho)-\pi_{q-1,1}-\pi_{q-2,2}
\end{align}



\newpage 

\section{Additional content}

\subsection{Lounge Design Problem}
 We now discuss this problem from the system perspective. Basically the aim in this section is to understand what is beneficial for the system. Even more fundamental question to be answered is, if the systems finds it beneficial to design a lounge. More so at high load condition  (where there is an expectation that lounge design can ease \cite{Juneja})..

The design of lounge can be represented by choosing  parameter $\eta$ optimally.  -- optimality -- budget . and congestion cost 

This problem can be viewed as two level optimization problem or stakel berge game problem, with an exception that we just have a response function of the customers at the lower level as given by Theorem 1.. Let $M(\eta)$ represent the monetary budget associated with design of a lounge with comfortness factor $\eta$.  Let $G(\eta)$ represent the total congestion of the system (including that in Lounge) for given $\eta$ -- observe here that, given $\eta$ there is a response by customer given by $A(\eta) = \eta \nicefrac{\mu}{\alpha}$ and $B = \nicefrac{\mu-\lambda}{\nu} - A(\eta) $ (Thm ..). which drives the system Markov chain $(Q_t, L_t)$ and the resultant stationary distribution  determines the congestion cost. 

In other words one can view $\bpi = \{\pi_{ql}\}_{q,l} =:\bpi(A(\eta)) $ as the response of the customers for the choice $\eta$ of the system and then  thus $G(\eta) = G(\bpi (A(\eta) ) $ represents the congestion cost:
\begin{eqnarray*}
    G(\eta) = \sum_{q,l} \pi_{ql} (A(\eta) ) g(q,l)
\end{eqnarray*}
where $g(q,l)$ represents the congestion cost at queue lounge occupancy levels $(q,l).$

Now the Lounge design problem can be defined formally as below:
\begin{eqnarray}
\min_{\eta}  G(\eta) + \omega_m M(\eta)  
    \mbox{ subject to }  \nicefrac{\mu-\lambda}{\nu} - A(\eta) \ge 0.
\end{eqnarray}
In the above $\omega_M$ represents the trade-off factor and the constraint   ensures that the lounge proposition $\eta)$ is successfully accepted by customers, defined  formally by 
$$
P_{\bpi (\eta)} (L > 0) = \sum_{(q,l), l > 0 } \pi_{q,l} (\eta)  > 0 .
$$
It is reasonable to assume $M$ is strictly monotone in $\eta$ and the congestion cost depends only via $A(\eta)$, in other words it is sufficient to repose this problem with $A$ as the design parameter, observe $\nicefrac{\mu-\lambda}{\nu} - A(\eta) \ge 0$ if and only if $A < \nicefrac{\mu-\lambda}{\nu}$. Thus sufficient to solve the following problem
\begin{eqnarray}
\min_{A \in \{1, \cdots, \nicefrac{\mu-\lambda}{\nu}\}}  G(A) + \omega_m M(A \nicefrac{\alpha}{\mu})  
    \mbox{ subject to }  \nicefrac{\mu-\lambda}{\nu} - A(\eta) \ge 0.
\end{eqnarray}

{\color{red}\subsection*{Pessimal  Anticipated approximate cost by using Lounge }

Two important points about this modeling:

\begin{itemize}
    \item Majority of real-life rational agents are usually bounded rationals (\cite{}) ---  they use simple methods to approximately estimate their cost of either choice (see e.g., for discussion on limited cognitive .. ). 

    \item Further, they also consider pessimal anticipated values while estimating quantities related to future events.  
\end{itemize}

Pessimal anticipated utilities are widely used in game theory, for determining the potential future reactions/choices of other  agents, like in coalition formation games (e.g., the deviating coalition assumes that the left over agents can rearrange themselves in future in such a way that the deviating coalition is maximally effected \cite{sultana2024cooperate}). 
We use similar ideas to model the behavior of the rational agents. 
The rational agents estimate the expected number in the queue at the instance of their return from Lounge in the following pessimal manner:  a) they assume that all the customers in Lounge would return to queue before them; b) they assume all arrivals after them would join the queue directly;  and c) finally they assume fluid to estimate the number waiting in the queue at their return time.   
Using fluid model, they assume that customers are reduced at rate $\mu-\lambda$ (i.e., using $\dot{Q} = -\mu + \lambda$). In other words the anticipated reduction  in the number waiting in the queue after time $T$ is given by 
$$
  (\mu - \lambda) T  
$$
and hence by their return time $T $ which is exponentially distributed with parameter $\nu$ is given by:
$$
\frac{\mu-\lambda}{\nu}.
$$
Thus the anticipated  number of customers waiting in the queue,  by the time the tagged customer returns to queue, given $(Q,L)$ is the state at its arrival is given by:
$$
\left ( Q +L - \frac{\mu-\lambda}{\nu} \right )^+
$$
and the anticipated  cost of using Lounge to initially relax is given by:
$$
C_L(Q,L) = \frac{\beta}{\nu} + \frac{\alpha}{\mu} \left ( Q +L - \frac{\mu-\lambda}{\nu} \right )^+
$$}

{\color{red} Expected value of congestion cost:
$$
E_\pi [ Cong (Q) ] = E [\max\{ \Omega,  \frac{1} {C - Q}  \} ]
$$

Another performance metric, is the fraction of rational arrivals that enter the Lounge (i.e.,  use Lounge),  by PASTA which equals
$$
P_\pi ( Q > A , L < B ) $$
 using the stationary distribution of quantities above  $A.$

 The expected time spent by customers of the system in Lounge,  by PASTA
 $$
 \frac{1}{\nu} P_\pi ( Q > A  , L < B )
 $$}
\subsection{Balance equations}
here figure comes
\begin{figure*}[htbp]
\begin{minipage}{0.45\textwidth}
When $l = 0$,
\begin{align}
    \pi_{0, 0}  &=  (1 - \rho),\ 
    \pi_{1, 0} = \rho (1 - \rho),\label{eq_q=00}
\\
    \pi_{2, 0} &= \pi_{1, 0} \frac{\lambda + \mu}{\mu} -\rho \pi_{0,0} -\pi_{1, 2}, \mbox{ and for q } > 2 \nonumber \\
    \pi_{q+1, 0} &= \frac{\theta_0}{\mu}\pi_{q, 0} - \frac{\nu}{\mu} (\pi_{q, 0} + \pi_{q-1, 1}) \nonumber
\end{align}
When $1 \le 1 \le B-1$,
\begin{align*}  \pi_{2,l}&=\pi_{1,l}\frac{\theta_l}{\mu} - \pi_{1, l+1},  \mbox{ and for q } > 2 \nonumber \\
    \pi_{q,l} &= \frac{\theta_l}{\mu}\pi_{q - 1, l} - \frac{\lambda}{\mu}\pi_{q- 2, l}  - \frac{(l-1)\nu}{\mu}\pi_{q- 2, l+1}
\end{align*}
When $l = B$,
\begin{align*} 
    \pi_{q,l} &= \frac{\theta_l}{\mu}\pi_{q - 1, l}  - \frac{\lambda}{\mu}\pi_{q- 2, l}  \ \indc{q > 2}
\end{align*}
    \caption{Balance Equations for $q \le  A+1$\label{balance_eqns1}}

\end{minipage}
\hspace{8mm}
\begin{minipage}{0.45\textwidth}
    When $l = 0$,
\begin{align}
    \pi_{q+ 1, 0} &= \frac{\theta_0}{\mu}\pi_{q, 0} - \frac{\nu}{\mu} (\pi_{q, 0} + \pi_{q-1, 1}),\label{eq_l0q>A+1}
\end{align}
When $1 \le 1 \le B-1$,
\begin{align}
\pi_{q, l} &= \frac{\theta_l}{\mu}\pi_{q-1,l}- \frac{\lambda}{\mu} \pi_{q-1,l-1} - \frac{(l+ 1)\nu}{\mu} \pi_{q- 2, l+1}, \nonumber \\
&&  - \frac{\lambda}{\mu}  \pi_{q-2,l} \indc{q=A+2}, \label{eq_q>A+2}
\end{align}
When $l = B$,
\begin{align}
    \pi_{q,B} &= \pi_{q-1,B} \frac{\theta_B}{\mu} - \frac{\lambda}{\mu} (\pi_{q-2, B} + \pi_{q-1 ,B - 1}). \label{eq_lBq>A+1}
\end{align}
\caption{Balance Equations for $q > A+1$\label{balance_eqns2}}
\end{minipage}

      \end{figure*}  
 
\hide{\section{Stationary Distribution}
Taking a simple case where, $B=1$ we have the balance equations as follows:
{\begin{align}
\lambda\pi_{00}&=\mu\pi_{10}, \nonumber\\
\theta\pi_{11}&=\mu\pi_{21}, \nonumber\\
(\lambda+\mu)\pi_{10}&=\mu\pi_{20}+\mu \pi_{11}+ \lambda \pi_{00} . \nonumber\\
\mbox{ For all } 1 &\le& k\le A+1, \nonumber\\
\pi_{(k+1)0} &= \frac{\theta}{\mu}\pi_{k0}-\rho\pi_{(k-1)0}-\frac{\nu}{\mu}\rho^k(1-\rho), \nonumber\\
    \pi_{k1} &= \frac{\theta}{\mu}\pi_{(k-1)1}-\rho\pi_{(k-2)1}.\nonumber \\
\mbox{ For all } k&>&A+1,\nonumber\\
    \pi_{(k+1)0} &= \frac{\theta}{\mu}\pi_{k0}-\frac{\nu}{\mu}(\pi_{k0}+\pi_{(k-1)1}), \nonumber\\
    \pi_{k1} &= \frac{\theta}{\mu}\pi_{(k-1)1}-\frac{\lambda}{\mu}(\pi_{(k-1)0}+\pi_{(k-2)1}).\nonumber 
\end{align}}
 One can observe that, the stationary distribution of  total number in system comes from M/M/1  queue. So, we have the following equations:
\begin{align}
      \pi_{(k+1)0} &= \frac{\theta}{\mu}\pi_{k0}-\frac{\nu}{\mu}\rho^{k}(1-\rho) , \nonumber\\
    \pi_{k1} &= \frac{\theta}{\mu}\pi_{(k-1)1}-\frac{\lambda}{\mu}\rho^{k-1}(1-\rho).\nonumber 
\end{align}
We have observed that, for $k\ge A+1$ the ratio of $\pi_{(k+1)0}$ and $\pi_{k1}$ is constant:
\begin{align}
    \frac{\pi_{(k+1)0}}{\pi_{k1}}=\frac{\nu}{\mu}. \label{eqn_ratio}
\end{align}
And  we also know that, 
\begin{align}
\pi_{(k+1)0}+\pi_{k1}=\rho^{k+1}(1-\rho). \label{eq_sumrho^k(1-rho)}
\end{align}
From \eqref{eqn_ratio} and \eqref{eq_sumrho^k(1-rho)}, we have the following for all $k\ge A+1$:
\begin{align}
    \pi_{(k+1)0}&= \rho^{k+1}(1-\rho)\frac{\nu}{\nu+\mu}, \nonumber\\
    \pi_{k1}&= \rho^{k+1}(1-\rho)\frac{\mu}{\nu+\mu} \nonumber.
\end{align}
For $l=1$ and  $k\le A+1$, we have the following stationary probabilities as a function of $\pi_{11}$:
\begin{align}
    \pi_{k1}=m_{k-1}\pi_{11}, \mbox{ where, }\\
    m_{k}= U \alpha^k+V \beta^k, \mbox{ where, }\\
    U=
\end{align}

\section{Analysis - Balance equations}
Then we have the balanced equations as follows (where, $\theta_i=\lambda+\mu+i \nu$ for all $i$):
\begin{align}
\pi_{0, 0} &= (1 - \rho),\nonumber\\
\pi_{1, 0} &=\rho (1 - \rho),\nonumber\\
\pi_{2, 0} &= \pi_{1, 0} \frac{\lambda + \mu}{\mu} -\rho \pi_{0,0} -\pi_{1, 2},\nonumber\\
\pi_{2, B} &= \pi_{1, B} \frac{\theta_B}{\mu},\nonumber\\
\mbox{for }  q &= 2,  1 \leq l \leq B - 1 ,\nonumber\\
\pi_{2, l} &= \pi_{1, l} \frac{\theta_l}{\mu} - \pi_{1, l+1}, \nonumber\\
\mbox{for } 3 &\leq& q \leq A+1,  l = B, \nonumber \\
\pi_{q,l} &= \frac{\theta_l}{\mu}\pi_{q - 1, l}  - \frac{\lambda}{\mu}\pi_{q- 2, l},\nonumber\\
\mbox{for } 3 &\leq& q\leq A+1, 1 \leq l \leq B - 1, \nonumber\\
\pi_{q,l} &= \frac{\theta_l}{\mu}\pi_{q - 1, l} - \frac{\lambda}{\mu}\pi_{q- 2, l}  - \frac{(l-1)\nu}{\mu}\pi_{q- 2, l+1} \nonumber,\\
\mbox{ for }  q &= A+2,  1 \leq l \leq B - 1,\nonumber\\
\pi_{q, l} &=\frac{\theta_l}{\mu} \pi_{q - 1, l}  - \frac{\lambda}{\mu} (\pi_{q - 1, l-1} + \pi_{q- 2,l})-\frac{(l+1)\nu}{\mu}\pi_{q-2,l+1}, \nonumber\\
\mbox{ for }  3 &\le& q\le A+1,  l = 0, \nonumber\\
\pi_{q,0} &= \pi_{q - 1, 0} \frac{\theta_1}{\mu} - \frac{\lambda}{\mu}\pi_{q - 2, 0}  -\frac{\nu}{\mu} (\pi_{q - 2,1} + \pi_{q - 1, 0}) \nonumber\\
\mbox{ for }  q &>& A+1,   l = B, \nonumber\\
\pi_{q,B} &= \pi_{q-1,B} \frac{\theta_B}{\mu} - \frac{\lambda}{\mu} (\pi_{q-2, B} + \pi_{q-1 ,B - 1}), \nonumber\\
\mbox{for }  q &>& A+2,  1 \leq l \leq B - 1, \nonumber\\
\pi_{q+1, l} &= \frac{\theta_l}{\mu}\pi_{q,l}- \frac{\lambda}{\mu} \pi_{q,l-1} - \frac{(l+ 1)\nu}{\mu} \pi_{q- 1, l+1}, \nonumber\\
\mbox{ for }  q &>& A+1,  l= 0, \nonumber \\
\pi_{q+ 1, 0} &= \frac{\theta_0}{\mu}\pi_{q, 0} - \frac{\nu}{\mu} (\pi_{q, 0} + \pi_{q-1, 1}), \nonumber \\
\end{align}
\section{Model for High Traffic}
The decision rule is, customers join lounge observing $(Q,L)$ in the system if and only if,
\begin{align}
    \frac{\alpha Q}{\mu}&>& (Q+L) \mbox{E(busy period)}\beta, \nonumber\\
   Q&>& \frac{\mu \beta L}{\mu(\alpha-\beta)-\lambda \alpha},
\end{align}

Say, $a=\floor{\nicefrac{\mu \beta }{\mu(\alpha-\beta)-\lambda \alpha}}$. We have the following balance equations:
\begin{align}
    \pi_{0,0}&=(1-\rho),\nonumber \\
    \pi_{1,0}&=\rho(1-\rho), \nonumber\\ \pi_{1,1}&=\rho^2(1-\rho),\nonumber
    \end{align}
    \begin{align}
    \mbox{ for all }L&\ge&1,\nonumber\\(\lambda+\mu)\pi_{La+1,L}&=\lambda \pi_{La,k},\nonumber\\
    (\lambda+\mu)\pi_{(L-1)a+1,L}&= \lambda (\pi_{(L-1)a+1,L-1}+\pi_{(L-1)a,k})\nonumber\\
    &+&\mu \pi_{(L-1)a+2,L},\nonumber\\
    \mbox{ for all } 1<Q&<&La+1 \mbox{ and, } Q\ne La+1,\nonumber\\(\lambda+\mu)\pi_{Q,L}&=\lambda \pi_{Q-1,L}+\mu \pi_{Q+1,L}, \nonumber\\
(\lambda+\mu)\pi_{1,L}&=\mu(\pi_{1,L+1}+\pi_{2,L}).\nonumber\\
\end{align}

For $L=1$,
\begin{align}
    \pi_{a,1}&=\left(1+\frac{1}{\rho}\right)\pi_{a+1,1},\nonumber \\
    \pi_{a-1,1}&=\left(1+\frac{1}{\rho}+\frac{1}{\rho^2}\right)\pi_{a+1,1}, \nonumber\\
    \pi_{a-2,1}&=\left(1+\frac{1}{\rho}+\frac{1}{\rho^2}+\frac{1}{\rho^3}\right)\pi_{a+1,1},\nonumber
\end{align}
By  recursion we have the following:
\begin{align}
    \pi_{a-k,1}&=\sum_{i=0}^{k+1}\frac{1}{\rho^i}\pi_{a+1,1}, \mbox{ for all } 0 \le k \le a-1.
\end{align}
Thus,
\begin{align}
\pi_{1,1}=\sum_{i=0}^{a}\frac{1}{\rho^i}\pi_{a+1,1}= \frac{1-\rho^{a+1}}{\rho^{a}(1-\rho)}\pi_{a+1,1}.
\end{align}
We know that, 
\begin{align}
    \pi_{1,1}=\rho^2(1-\rho), \mbox{ so, }\nonumber\\
\pi_{a+1,1}=\frac{(1-\rho)^2\rho^{a+2}}{1-\rho^{a+1}}.
\end{align}

Now, for $L=2$,
\begin{align}
    \pi_{2a,2}&=\left(1+\frac{1}{\rho}\right)\pi_{2a+1,2},\nonumber\\
    \pi_{2a-1,2}&=\left(1+\frac{1}{\rho}+\frac{1}{\rho^2}\right)\pi_{2a+1,2},\nonumber\\
    \pi_{2a-2,2}&=\left(1+\frac{1}{\rho}+\frac{1}{\rho^2}+\frac{1}{\rho^3}\right)\pi_{2a+1,2}.\nonumber
\end{align}
By recursion we have the following:
\begin{align}
\pi_{2a-k,2}&=\sum_{i=0}^{k+1} \frac{1}{\rho^i}\pi_{2a+1,2}, \mbox{ for all } 0 \le k \le a-1 \nonumber
\end{align}
\begin{align}
\pi_{a+1,2}&=\sum_{i=0}^a \frac{1}{\rho^i}\pi_{2a+1,2},\nonumber\\
\pi_{a,2}+\pi_{a+1,1}&=\sum_{i=0}^{a+1}\frac{1}{\rho^i}\pi_{2a+1,2},\nonumber\\
\pi_{a-1,2}+\pi_{a,1}&=\sum_{i=0}^{a+2}\frac{1}{\rho^i}\pi_{2a+1,2}
\end{align}
Thus, by recursion we have,
\begin{align}
\pi_{1,2}+\pi_{2,1}&=\pi_{2a+1,2}\sum_{i=0}^{2a}\frac{1}{\rho^i}
\end{align}
From M/M/1 queue we have that,
\begin{align}  \pi_{1,2}+\pi_{2,1}&=\rho^3(1-\rho), \mbox{ so, }\nonumber\
\pi_{2a+1,2}=\frac{\rho^{2a+3}(1-\rho^2)}{1-\rho^{2a+1}}.
\end{align}
Similarly we have,
\begin{align}
\pi_{3a+1,3}=\frac{\rho^{3a+4}(1-\rho^2)}{1-\rho^{3a+1}}.
\end{align}
By, recursion we have the following,
\begin{align}
\pi_{ka+1,k}=\frac{\rho^{k(a+1)+1}(1-\rho^2)}{1-\rho^{ka+1}}, \mbox{ for all } k \ge 0.
\end{align}
The fraction of customers enters to the lounge is as follows:
\begin{align}
    \sum_{l=0}^\infty P(Q>al, L=l)=\rho(1-\rho)^2\sum_{l=0}^\infty\frac{\rho^{l(a+1)}}{1-\rho^{la+1}}. 
\end{align}

\begin{figure*}[htbp]
\includegraphics[clip,scale=.4]{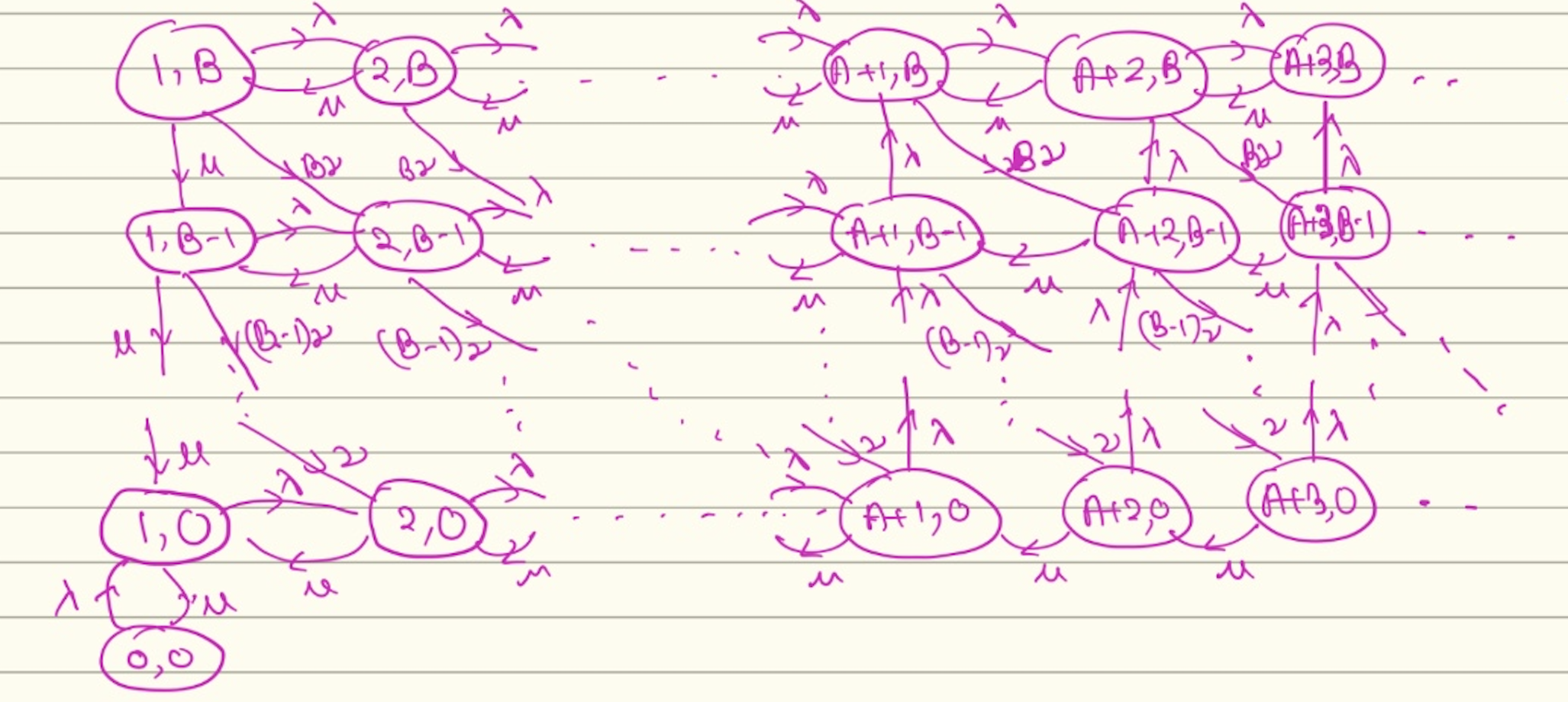}
     \caption{Markov chain}
    \label{fig:enter-label2}
        \end{figure*}

\begin{figure*} 
\includegraphics[clip,scale=.5]{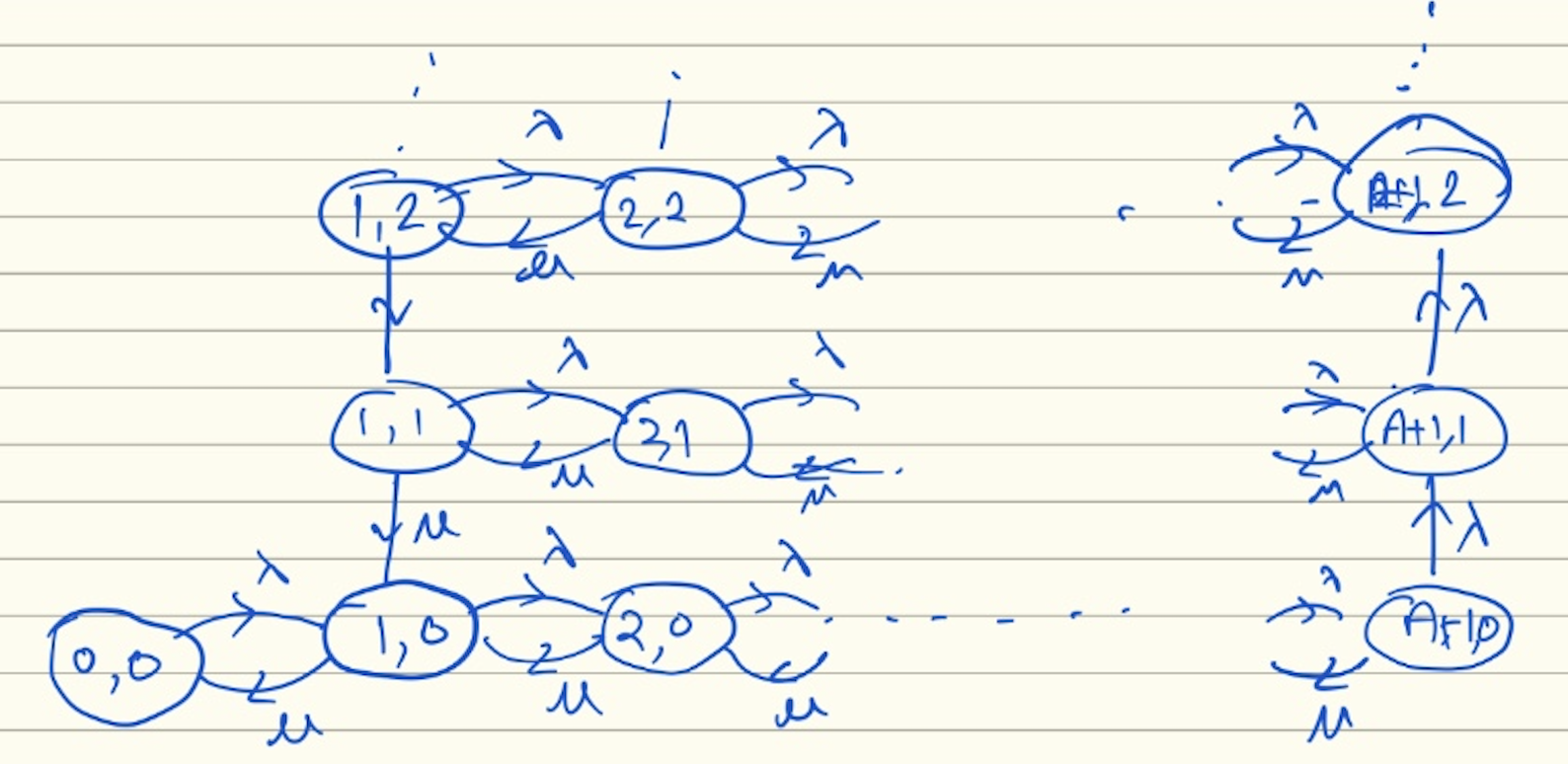}
     \caption{Markov chain --- FOR APPROXIMATION}
    \label{...}
        \end{figure*}}
\newpage}



\end{document}